\documentclass[reqno,11pt]{amsart}
\RequirePackage{amsmath,amssymb,amsthm,graphicx,mathrsfs,url,slashed,subcaption}
\RequirePackage[usenames,dvipsnames]{xcolor}
\RequirePackage[colorlinks=true,linkcolor=Red,citecolor=Green]{hyperref}
\RequirePackage{amsxtra}
\usepackage{cancel}
\usepackage{tikz-cd}
\usepackage{tikz}
\usepackage{xcolor}
\usepackage{soul}
\usepackage{todonotes}
\usepackage{xcolor}
\usepackage{soul}

\title{Switching Categoricity Behavior with a Given Degree}
\author{David Gonzalez, José Jeremías Valenzuela Morales, Java Darleen Villano}
\date{\today}
\subjclass[]{}
\thanks{The third author would like to thank Reed Solomon for helpful discussions in the early stages of this project.}

\newcommand{\A}{\mathcal{A}}

\newcommand{\B}{\mathcal{B}}

\renewcommand{\epsilon}{\varepsilon}
\newcommand{\<}{\langle}
\renewcommand{\>}{\rangle}

\renewcommand{\phi}{\varphi}

\newcommand{\M}{\mathcal{M}}

\newtheorem{theorem}{Theorem}[section]

\newtheorem{lemma}[theorem]{Lemma}

\newtheorem{question}[theorem]{Question}

\theoremstyle{definition}



\numberwithin{equation}{section}

\def\XXint#1#2#3{{\setbox0=\hbox{$#1{#2#3}{\int}$ }
\vcenter{\hbox{$#2#3$ }}\kern-.6\wd0}}

\usepackage[backend=bibtex,style=alphabetic,giveninits=true,maxbibnames=99]{biblatex}

\renewbibmacro{in:}{}
\DeclareFieldFormat{pages}{#1}

\begin{document}

\begin{abstract}
Given a noncomputable, computably enumerable set $D$, we construct a computable structure $G$ such that any two computable copies of $G$ are computably isomorphic while there are two $D$-computable copies of $G$ that are not $D$-computably isomorphic.
In other words, $G$ is computably categorical, but not computably categorical relative to $D$.
Conversely, we construct a computable structure that is not computably categorical, but is computably categorical relative to $D$.
These results differ from those in the literature regarding computable categoricity and its relativiziations because the degree $D$ is given instead of constructed.
Our work negatively answers a question of Downey, Harrison-Trainor and Melnikov in the computably enumerable case.
\end{abstract}

\maketitle

\section{Introduction}
Computable structure theory aims to calibrate the complexity of mathematical objects.
Isomorphism is the fundamental notion of mathematical sameness.
Given this, it is natural that understanding the complexity of isomorphisms is one of the primary subjects in computable structure theory.
A foundational observation is that sometimes two computable structures that are classically isomorphic have a computable isomorphism between them, and sometimes they do not.
We call structures with the special simplicity property that any two computable copies are computably isomorphic \textit{computably categorical}.
Computable categoricity has been of considerable interest for several decades (see, for example, \cite{Ers77,LaRo78,Sm1981,Gon1980,Rem81,GLS2003,LMMS2005,MN18} for various articles from the last 50 years studying this notion for specific classes of structures).
This definition also relativizes.
In a world where all computation is allowed to consult the oracle $X$, the relevant notion is \textit{computable categoricity relative to} $X$, which states that any $X$-computable copies have an $X$-computable isomorphism between them.
This too has been of interest for many years, often under the guise of understanding relative computable categoricity (the condition that a structure is computable categorical relative to $X$ for every $X$), or the notion of Scott rank $1$ (the condition that a structure is computable categorical relative to every $X$ on a Turing cone).
More about relative computable categoricity \cite{AKMS89,Chi90,Ven92,DKLT13,McCoy2002,McCoy2003}, and Scott rank $1$ structures \cite{Sco65,MonSR,MonEatomic,AGNHTT,GR23,GLRS,GRT} can be read in many sources including those cited here.

One might expect that a structure $\M$'s status as simple is typically unchanged when switching between these models of computation.
In other words, the notions of computable categoricity and computable categoricity relative to $X$ for different oracles $X$ should be closely tied.
In a sense, this suspicion is borne out.
For example, Goncharov \cite{Gon80EffDim} showed that if $\M$ has a copy where the $\forall\exists$ theory is decidable, then computable categoricity implies computable categoricity relative to $X$ for every $X$.
Intuitively, any ``nice'' structure that may come up in the course of typical mathematical practice has such a copy.

Remarkably, however, some structures are not quite so nice and demonstrate a constellation of wild behaviors when considering their categoricity relative to various degrees $X$.
In \cite{DHTM21}, Downey, Harrison-Trainor and Melnikov demonstrate that there is a computably categorical structure $\A$ and computably enumerable sets $X_0<_TY_0<X_1<_TY_1<\cdots$ where $\A$ is computably categorical relative to $X_i$ and not computably categorical relative to $Y_i$ for every $i$.
This was taken even further by Villano in \cite{Vil25}.
For each computable partially ordered set $P$ and computable partition $P_1\sqcup P_2=P$, she constructed a computably categorical structure $\B$ and an embedding of $P$ into the computably enumerable degrees where $\B$ is computably categorical relative to the degrees corresponding to $P_1$ and not computably categorical relative to the degrees corresponding to $P_2$.

It is natural to ask \cite{DHTM21}: is there something special about the degrees that are constructed alongside these structures with interesting categoricity behavior?
Put another way, if you received an arbitrary set $X$ instead of being allowed to construct one, can you construct a computably categorical structure that is not computably categorical relative to $X$?
Another equivalent way still would be to ask if there are any special sets $X$ such that if you are computably categorical, you are definitely computably categorical relative to $X$.
Our first main result states that, in the setting of $X$ being a c.e. set, there is nothing special about the degrees used in the constructions of \cite{DHTM21} and \cite{Vil25}.
More precisely, we demonstrate the following.

\begin{theorem}\label{thm:switchToNo}
        Given a noncomputable c.e.\ set $D$, there exists a structure $\mathcal{A}$ that is computably categorical but not computably categorical relative to $D$.
\end{theorem}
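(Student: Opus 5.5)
We will build $\A$ by a priority construction on a tree, in the style of the Goncharov-type constructions of computably categorical structures that fail relative categoricity (as in \cite{DHTM21}), but with the oracle $D$ given in advance and used only through its enumeration $D=\bigcup_s D_s$. The structure will be a disjoint union of infinitely many components (e.g.\ coded as families of finite graphs attached to distinct roots). Each component is either a ``rigid'' piece, or a piece that can be changed into one of two nonisomorphic configurations at a late stage. Two families of requirements will be met. The first family is the categoricity requirements $\mathcal{P}_e$: if $\M_e$ is a computable copy of $\A$, then there is a computable isomorphism $f_e\colon\A\cong\M_e$. The second family is the noncategoricity requirements $\mathcal{N}_i$: the $i$-th $D$-computable partial map $\Phi_i^D$ is not an isomorphism between two fixed $D$-computable copies $\A$ and $\B$ of $\A$.

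For the $\mathcal N_i$ we use a building block that responds to $D$-changes. We build $\B$ as a $D$-computable copy: its $i$-th component is computably presented with a pending choice that is resolved by $D$. The strategy waits for $\Phi_i^D$ to map a designated element $a_i$ of $\A$ to some $b$ in $\B$ with $D$-use $u$. It then arranges, by permitting, that the component of $b$ is not isomorphic to that of $a_i$. The key point is noncomputability of $D$. If $D$ never changes below the relevant uses at the moments we ask, then $D$ would be computable, by the standard argument that the strategy gets infinitely many attempts whose $D$-restrictions are eventually correct. So some attempt sees $D\upharpoonright u$ change after the computation. At that point, $\B$ (being $D$-computable) can use the new $D$-information to choose the other configuration for $b$'s component, while $\A$ is unchanged. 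Here ``changes below the use'' means that the $D$-computation of $\Phi_i^D(a_i)$ is the true one only if computed from the final $D$, so the diagonalization is $D$-computable in hindsight. In short, this is a permitting argument in which the threshold is the use.

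For the $\mathcal P_e$ we follow Goncharov's method: we build $f_e$ on the components of $\A$ by matching each component with a component of $\M_e$ once it looks isomorphic. The danger is that an $\mathcal N$-strategy later changes a component of $\A$, which invalidates $f_e$. Changes to the computable structure $\A$ itself must therefore be arranged so that $\mathcal P_e$ can follow them. Each $\mathcal N$-action changes $\A$ into a configuration to which both possible extensions are still computably matchable: we use ``triangle/loop'' gadgets in which the change in $\A$ makes two components isomorphic in a way any computable copy must reflect. This lets $f_e$ be repaired by swapping images, at the cost of finitely many injuries. Lower-priority $\mathcal N$-nodes guessing that $\M_e$ is a copy wait for $\M_e$ to catch up (an expansionary-stage guess) before acting. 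This forces every computable $\M_e$ either to follow our change or to be non-isomorphic.

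The verification splits into three steps. First, a tree argument identifies the true path and shows each requirement acts finitely often. Second, on the true path each $f_e$ is a computable isomorphism. Third, each $\mathcal N_i$ is met, by the permitting claim together with the noncomputability of $D$.

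The main obstacle is the tension in the third step. The $D$-change that lets $\B$ differ must not be usable by a computable copy $\M_e$. Otherwise a computable structure could simulate the same diagonalization and break categoricity. I would resolve this with timing. The $\mathcal N$-strategy commits its two configurations computably and in advance, and lets $D$ decide only \emph{which} of two already-built isomorphic-looking pieces of $\B$ gets the label. A computable opponent cannot know $D$'s final choice, but it also gains nothing, because both choices are realized in $\A$ symmetrically. Checking that this symmetry survives interactions among infinitely many $\mathcal P_e$ is where I expect the real work.
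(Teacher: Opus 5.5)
Your overall architecture matches the paper's: a computable bouquet-type $\A$, a $D$-computable copy $\B$, Goncharov-style matching for the categoricity requirements, and permitting plus noncomputability of $D$ for the diagonalization. But the diagonalization step fails as written. You act when $D\restriction u$ changes, where $u$ is the use of $\Phi_i^D(a_i)=b$. Such a change destroys exactly the computation you want to defeat: $\Phi_i^D(a_i)$ is recomputed from the new oracle and can simply follow $\B$'s switch. Noncomputability of $D$ by itself only guarantees changes of this useless kind.

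What is missing is a permission \emph{window}. In the paper, once $\Phi_e^D$ has converged on the pair $a_{2n},a_{2n+1}$ with use $u_n$, fresh distinguishing cycles are added: $5n+3$ at $a_{2n}$ and $b_{2n}$, and $5n+4$ at $a_{2n+1}$ and $b_{2n+1}$. The cycles in $\B$ get a new large $D$-use $v_n$. The strategy acts only on a change in $(u_n,v_n]$, which keeps $\Phi_e^D$'s computation but releases $\B$'s commitment. For ``no permission implies $D$ computable'' to survive the passage to windows, two things are arranged. First, $v_{n^{k-1}}$ is chosen only after $u_{n^k}$ is defined, so the windows chain together and cover $(u_{n^0},\infty)$. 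Second, all earlier windows stay open simultaneously.

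Changes below $u_{n^0}$ are injuries, not permissions, and the paper handles them with a separate self-initializing $\infty$ outcome: rebuild $\B$, homogenize, and retry with longer cycles. If this repeats infinitely often it can give components with infinitely many cycles. The paper points out this is unavoidable when $D$ has degree $\mathbf 0'$, since locally finite constructions of this kind are categorical relative to $\mathbf 0'$. A sketch in which each component is switched once ``at a late stage'' has no room for this.

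The categoricity side also does not work as described. If the change in $\A$ makes the two pieces isomorphic, it no longer matters which piece of $\B$ the map $\Phi$ chose, so nothing is diagonalized. ``Repairing $f_e$ by swapping images'' would be triggered by every lower-priority $\mathcal N$-action below $\mathcal P_e$, hence infinitely often, leaving $f_e$ only limit computable.

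The paper's answer to the asymmetry your last paragraph leaves open is the \emph{challenge}. The fresh cycles $5n+3,5n+4$ are put into $\A$ \emph{before} the window opens. Every $S$-strategy $\beta$ with $\beta^\frown\langle\infty\rangle\subseteq\alpha$, where $\alpha$ is the diagonalizing strategy, must first find these cycles in its $\M_i$ and extend its map. Each relevant computable copy thereby commits irrevocably to the correct matching, whereas $\B$'s commitment carries $D$-use $v_n$ and can be undone.

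On permission, $\A$ does change: it erases the old distinction by adding $5n+2$ to $a_{2n}$ and $5n+1$ to $a_{2n+1}$. Meanwhile $\B$ reattaches $5n+3$ and $5n+4$ swapped. The two pieces stay nonisomorphic, no $f_i$ is ever revised, and $\Phi_e^D(a_{2n})=b_{2n}$ is now wrong.
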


This answers Question 3.8 from \cite{DHTM21} in the case that $\mathbf{d}$ is computably enumerable.
Namely, there is no c.e. degree $\mathbf{d}$ such that every computably categorical structure is computably categorical relative to $\mathbf{d}$.

The constructions of \cite{DHTM21} and \cite{Vil25} also demonstrate the opposite phenomenon.
In particular, by moving from $Y$ to $X>_TY$ we may move from not being computably categorical relative to $Y$ to being computably categorical relative to $X$.
We also demonstrate that this behavior does not depend on the choice of $X$ among the c.e. sets.

\begin{theorem}\label{thm:switchToYes}
    Given a noncomputable c.e. set $D$, there exists a computable structure $\mathcal{A}$ that is computably categorical relative to $D$ but is not computably categorical.
\end{theorem}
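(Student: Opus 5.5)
We prove Theorem \ref{thm:switchToYes} by a finite-injury priority construction of a computable structure $\mathcal{A}$, built together with a second computable copy $\mathcal{B}$. The construction meets two families of requirements. Diagonalization requirements $R_e$ say that $\varphi_e$ is not an isomorphism from $\mathcal{A}$ to $\mathcal{B}$; together they witness that $\mathcal{A}$ is not computably categorical. Categoricity requirements $P_{i,j}$ concern any two $D$-computable structures $\Phi_i^D$ and $\Phi_j^D$. If both are copies of $\mathcal{A}$, then $P_{i,j}$ asks for a $D$-computable isomorphism between them.

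To make $P_{i,j}$ checkable we give $\mathcal{A}$ a rigid, decomposable shape: a disjoint union of components, each of which is either rigid or has exactly two automorphism types. For example, we can use the standard Goncharov-style components: a family of $\omega$-chains or cycles with attached gadgets, where each component either remains ``simple'' or is later ``closed'' into one of two non-isomorphic finite configurations. Each $R_e$ receives its own component $C_e$. On $C_e$ we wait for $\varphi_e$ to commit on a designated element $a_e$. We then extend $C_e$ in $\mathcal{A}$ and $C_e$ in $\mathcal{B}$ so that the image chosen by $\varphi_e$ is the wrong one. This is the classical diagonalization from Goncharov's proof that computable categoricity can fail.

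The role of $D$ is to let a $D$-oracle recover, for every component, the ``decision'' that $\mathcal{A}$ eventually makes about it. We tie every diagonalization action to $D$ changing. Use permitting: $R_e$ may act on $C_e$ only at a stage $s$ at which some number $x$ associated to $C_e$ with $x\le$ the current witness enters $D$ at $s$. More concretely, $R_e$ picks a fresh threshold $n$ and waits for $\varphi_e(a_e)\downarrow$. It then waits for some $x<n$ to enter $D_{s}\setminus D_{s-1}$ after that stage; when this happens it performs the diagonalizing extension. If $\varphi_e$ is an isomorphism and $R_e$ were never permitted, we would get a computable procedure for $D$. Given $x$, run until $\varphi_e$ converges on the witnesses for thresholds above $x$; then $x\in D$ iff $x\in D_s$. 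This contradicts the noncomputability of $D$. The standard argument handles this by using infinitely many attempts (thresholds $n_0<n_1<\cdots$) for each $R_e$, so that some attempt is permitted. Because $D$ is noncomputable, each $R_e$ is eventually satisfied, and the copy $\mathcal{B}$ is not computably isomorphic to $\mathcal{A}$.

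For the $P_{i,j}$ side, note that $D$ computes, for each component, whether and how it was ever modified. A modification of a component associated to threshold $n$ happens only when some $x<n$ enters $D$ after the component was created. So $D$ can compute a stage after which the isomorphism type of each component is settled: find the stage $s$ at which $D\restriction n$ has settled; no later action occurs on that component. Given two $D$-computable copies, $D$ can therefore build an isomorphism component by component. To match a component, wait until it has been identified inside each copy (components are distinguished by a computable, invariant labeling, e.g.\ distinct cycle lengths). Then $D$ knows the final finite configuration and can map each element to its correct counterpart, using that the non-rigidity is only the two-way choice that $D$ has resolved.

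The main obstacle is ensuring this final argument works uniformly in arbitrary $D$-computable copies, where elements of a component may appear before $D$ reveals the component's final type. The fix is to design the gadgets so that the ambiguous elements are only those whose placement is determined by the settled configuration; elements not yet in a settled part are mapped lazily, only after $D$ certifies the settling stage. I would also double-check that the infinitely-many-attempts permitting does not leave a component whose settling $D$ cannot bound. Each attempt's threshold is fixed at creation, so it is bounded by the modulus of $D$ restricted to that threshold.
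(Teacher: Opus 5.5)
Your proposal is correct, but it takes a genuinely different route from the paper.

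\textbf{The paper's route.} The paper never lets $D$ see the final shape of $\mathcal{A}$. Its $R_e$-strategies diagonalize with no permission at all when no $S_i$-strategy has yet mapped the active pair $A_n$ (Case 3.1). Otherwise they homogenize $A_n$ and wait for $D$ to change below $\min_i u^i_n$, the $D$-uses of the partial maps $f_i\colon\mathcal{A}\to\mathcal{M}_i^D$ that explicit $S_i$-strategies build on a tree. The verification must then argue, through the resetting of the parameters $m_i$, that every $f_i$ with $\mathcal{M}_i^D\cong\mathcal{A}$ is eventually rebuilt correctly.

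\textbf{Your route.} You require every change to a component to be permitted below a number fixed when that component is created. So the modulus of $D$ bounds the stage at which each finite component reaches its final form, and categoricity relative to $D$ follows outright. There are no $S$-requirements, no tree, no homogenization, and in fact no injury. Noncomputability of $D$ is used just as in the paper's lemma that each $R_e$ builds only finitely many components. This works so cleanly because the $\varphi_e$ being diagonalized are computable, so their convergence is never withdrawn, unlike the $\Phi_e^D$ of Theorem~\ref{thm:switchToNo}.

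\textbf{What each buys.} Your argument is shorter and proves more. The isomorphism only needs an oracle that computes $D$, so your $\mathcal{A}$ is computably categorical relative to every $X\geq_T D$. That is also its limitation: a structure categorical on the whole cone above $D$ can never fail to be categorical relative to some $Y>_T D$. So your method cannot be combined with Theorem~\ref{thm:switchToNo}-type requirements, for instance in the alternating-chain question of the last section. The paper's explicit $S_i$-strategies address only $D$-computable copies and leave room for such combinations.

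\textbf{For the write-up.} Commit to finite bouquet components in which each final root has a cycle length carried by no other root, as with the paper's $(5n+j)$-cycles, and drop the $\omega$-chain option. Then, once $D$ certifies the settling stage, the root of each component in any $D$-computable copy is the unique node lying on a $2$-cycle and on its distinguishing cycle, and the whole finite component can be mapped at once.
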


These results, taken together, can be seen as the first step into making interesting behavior like the chain of \cite{DHTM21} or partial order of \cite{Vil25} using degrees that are given instead of constructed.
There are also some similarities between our constructions and those seen in \cite{DHTM21} or \cite{Vil25}.
For example, we also construct a bouquet graph, composed of centers adorned with various loop sizes, via a detailed priority construction.
Despite the apparent connection with these known results, it should be noted that new proof ideas are needed to approach the theorems in this article.
There are two primary reasons for this.
The most direct reason is that we are only able to diagonalize in the construction when the given set $X$ permits us to do so.
The other arguments were able to grant themselves permission whenever they wanted in their construction of $X$ and thereby get away with a simpler priority arrangement.
The less obvious reason is a structural one.
The constructions in \cite{DHTM21} and \cite{Vil25} always created a locally finite structure; only finitely many loops are ever added to each center.
They both note that this means that the structures are always computably categorical relative to $0'$.
In our setting, when $X=0'$, we may want to construct a structure that is not computably categorical relative to $0'$.
This means that we are necessarily outside of the locally finite domain and must deal with particular types of infinite outcomes that force daisies with infinite petals.

We deal only with the standard notions from computable structure theory referred to in this introduction.
For a general reference on computable structure theory see \cite{AK00} or \cite{MonBook1,MonBook2}.

The article is written in four sections, including the current one containing introductory material.
The second section is dedicated to the proof of Theorem \ref{thm:switchToNo}.
The third section is dedicated to the proof of Theorem \ref{thm:switchToYes}.
The final section is brief and indicates some directions for future research.

\section{Theorem 1.1}

\begin{theorem}\label{thm: c.e. permitting on the finite injury side}
    Given a noncomputable c.e.\ set $D$, there exists a computable directed graph $\mathcal{A}$ that is computably categorical but not computably categorical relative to $D$.
\end{theorem}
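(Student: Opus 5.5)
We will build $\mathcal{A}$ as a computable bouquet graph: a collection of centers, each adorned with loops (petals) of various sizes, in the style of \cite{DHTM21} and \cite{Vil25}. The construction is a priority argument on a tree of strategies, and it runs simultaneously with a second structure $\mathcal{B}$ that is $D$-computable, together with a $D$-computable presentation of $\mathcal{A}$ itself. There are two kinds of requirements. For each pair of $D$-computable functions $\Phi_e^D$ we have a diagonalization requirement
\[
P_e:\ \Phi_e^D \text{ is not an isomorphism from } \mathcal{A} \text{ to } \mathcal{B},
\]
where $\mathcal{B}$ is a $D$-computable copy of $\mathcal{A}$. For each pair of computable graphs $(\mathcal{M}_i,\mathcal{M}_j)$ (we may take one to be $\mathcal{A}$ itself) we have a categoricity requirement
\[
R_i:\ \text{if } \mathcal{M}_i\cong\mathcal{A}, \text{ then we build a computable isomorphism } f_i:\mathcal{A}\to\mathcal{M}_i.
\]
The $R$-strategies follow the standard Goncharov/Khoussainov–Shore scheme. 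We wait for $\mathcal{M}_i$ to copy each newly built center with its current petals, and we extend $f_i$ only at expansionary stages. Once $f_i$ is committed on a center, any later change we make to $\mathcal{A}$ must be matched in $\mathcal{M}_i$ by adding identical loops to both images.

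Each $P_e$ strategy reserves a pair of centers $a_e, b_e$ that are currently twins, for instance both carrying petals of sizes $\{2e+3\}$. In $\mathcal{B}$, which is built using $D$, the strategy keeps the option of swapping the roles of the two centers. When $\Phi_e^D$ converges on $a_e$, we want to add a new distinguishing petal to one of them in $\mathcal{A}$, chosen to contradict $\Phi_e^D$, and then let $\mathcal{B}$ follow the appropriate assignment. The key point is that $\mathcal{B}$ must be $D$-computable, so any late change in how $\mathcal{B}$ is interpreted must be allowed by $D$. We therefore use c.e.\ permitting. The strategy picks a follower $x$ and waits for $\Phi_e^D(a_e)\!\downarrow$ with use above $x$. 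It then acts only when $D$ changes below that use, that is, when $x$ (or something below the use) enters $D$, which invalidates the old computation. The $D$-computable copy $\mathcal{B}$ decides which center is which by asking $D$ whether the permission came, so $\mathcal{B}$ is $D$-computable. Since $D$ is noncomputable, the usual argument shows that each $P_e$ is eventually permitted. Otherwise we could compute $D$: it would suffice to follow, for each of infinitely many followers, the convergent computations that are never permitted. The strategy therefore uses infinitely many followers and attempts, spawning fresh twin pairs each time.

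The main obstacle is the interaction between $P_e$ and higher-priority $R_i$. When $P_e$ adds a petal to $a_e$ in $\mathcal{A}$, the images $f_i(a_e)$ and $f_i(b_e)$ in $\mathcal{M}_i$ may have been matched the wrong way. $\mathcal{M}_i$ may also refuse to grow the corresponding petal, and we cannot wait for permission indefinitely while $f_i$ is fixed. Following \cite{DHTM21}, we handle this by having $P_e$ below the infinite outcome of $R_i$ work with several twin pairs. Before requesting permission it first waits until $\mathcal{M}_i$ has copied the pair. If $\mathcal{M}_i$ later fails to respond to a petal addition, then $\mathcal{M}_i\not\cong\mathcal{A}$, which we arrange by making the added petals unique, and this gives a finite outcome. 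Delays caused by waiting for permission must not injure $R_i$ infinitely often. Each attempt injures only finitely often, and because permitting succeeds only finitely often per follower, injury stays finite. Verification then consists of three checks: the tree outcomes are well defined, every $f_i$ on the true path is total and onto whenever $\mathcal{M}_i\cong\mathcal{A}$, and every $P_e$ is met via the permitting lemma, which uses noncomputability of $D$. Together these show that $\mathcal{A}$ is computably categorical but not computably categorical relative to $D$.
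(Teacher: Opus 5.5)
Your overall architecture matches the paper's: a bouquet graph, a $D$-computable copy $\mathcal{B}$, and permitting that relies on the noncomputability of $D$. However, the permission step is set up the wrong way round. You let $P_e$ act when $D$ changes below the use of $\Phi_e^D(a_e)$, which is exactly when the computation you want to contradict is destroyed. Nothing then stops the new computation from reading the same bit of $D$ that $\mathcal{B}$ reads to fix its orientation, and matching $\mathcal{B}$'s choice; such a $\Phi_e^D$ (its use is above $x$, as your strategy allows) is never diagonalized. The paper uses the opposite configuration. Once $\Phi_e^D$ maps the pair $a_{2n},a_{2n+1}$ correctly with use $u_n$, the new distinguishing loops are placed in $\mathcal{B}$ with a fresh, larger use $v_n$. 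Permission is a $D$-change in the window $(u_n,v_n]$, which lets $\mathcal{B}$ be rearranged while the $\Phi_e^D$-computation survives. Noncomputability is then applied to these windows: they overlap, since $v_{n^{k-1}}$ is chosen after $u_{n^k}$, so infinitely many never-permitted windows would compute $D$. With the window placed correctly, $D$-changes below $u_n$ need separate treatment, because they destroy both the computation and $\mathcal{B}$'s tentative loops. The paper's $\infty$ outcome rebuilds those loops and separates the pair again with longer cycles, so a center can end up with infinitely many petals. Something like this is unavoidable. As the paper notes, the locally finite bouquets produced by such permitting constructions are computably categorical relative to $\emptyset'$, so for $D\equiv_T\emptyset'$ a plan in which every center receives only finitely many petals cannot work.

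Second, twin centers in $\mathcal{A}$ are fatal for the categoricity requirements, and your remedy addresses the wrong danger. Suppose $a_e$ and $b_e$ are isomorphic when $f_i$ is committed on them (you explicitly wait for $\mathcal{M}_i$ to copy the pair before acting). After you add a petal to $a_e$, the computable copy $\mathcal{M}_i$ can add it to $f_i(b_e)$ instead. Then $\mathcal{M}_i$ remains isomorphic to $\mathcal{A}$ via the swap, every later change can be mirrored, and the computable map $f_i$ is wrong forever. The threat is not that $\mathcal{M}_i$ fails to respond but that it responds on the other image, and working with several twin pairs does not change this. The paper never has twins in $\mathcal{A}$ and never makes a $\Phi_e^D$-dependent choice there. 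The centers $a_{2n},a_{2n+1}$ carry loops of lengths $5n+1$ and $5n+2$ from the start. Loops of lengths $5n+3$ and $5n+4$ are added to $a_{2n},a_{2n+1}$ and to $b_{2n},b_{2n+1}$ simultaneously. The higher-priority $S$-strategies must match these loops consistently with $f_i$ (the challenge) before the window opens. Upon permission only $\mathcal{B}$ is reoriented, so that the $(5n+3)$-loop now sits on $b_{2n+1}$. Meanwhile $\mathcal{A}$ merely gains a $(5n+2)$-loop on $a_{2n}$ and a $(5n+1)$-loop on $a_{2n+1}$. Every $f_i$ stays correct, because the $(5n+3)$- and $(5n+4)$-loops still identify the centers of $\mathcal{A}$. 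On the other hand, $\Phi_e^D$, whose use lies below the window, still sends $a_{2n}$ to $b_{2n}$, which is now isomorphic to $a_{2n+1}$.
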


\subsection{Notation and definitions}
Before we list the requirements and strategies needed to prove Theorem \ref{thm: c.e. permitting on the finite injury side}, we define notation which will be utilized throughout the construction in this section and in the third section.

In our computable directed graph $\A$, the connected component with a root node called $a_{2s}$ will be referred to as the $2s$\textit{th connected component} of $\A$. The $2s$th component will always start with cycles of lengths $2$ and $5s+1$, and the $(2s+1)$st component will always start with cycles of lengths $2$ and $5s+2$. For the $2s$th and $(2s+1)$st components of $\A$, if a map $\Phi_e^D[s]$ converges on them, we let $u_s$ denote the associated $D$-use for this computation.

Since we must build $\A$ to not be computably categorical relative to $D$, we need to build a $D$-computable copy, $\B$, of $\A$. The root nodes in each connected component in $\B$ will be referred to similarly as those in $\A$. In the case where additional new cycles are attached to the root nodes $b_{2s}$ and $b_{2s+1}$ in $\B$, we define a new large value $v_s$ to be the associated $D$-use for these new cycles in $\B$. For $s\in\omega$, if both $u_s$ and $v_s$ are defined, then we call $(u_s,v_s]$ the \textit{permission interval} associated with the $2s$th and $(2s+1)$st components of $\A$. With this notation, we now cover the requirements needed for our priority construction.

\subsection{Requirements and Tree of Strategies}
We list the requirements that we will satisfy in our construction.
\[
P_e : \Phi_e^D:\mathcal{A}\to\mathcal{B} \ \text{is not an isomorphism; and}
\]
\[
S_i : \text{If $\mathcal{A}\cong\mathcal{M}_i$, then there exists a computable isomorphism $f_i:\mathcal{A}\to\mathcal{M}_i$.}
\]

Recall that $\mathcal{B}$ is a $D$-computable copy of $\mathcal{A}$ which we will also build in stages throughout the construction. The $S_i$ requirements ensure that $\mathcal{A}$ is computably categorical, and the $P_e$ requirements will ensure that $\mathcal{A}$ is not computably categorical relative to $D$.

Let $\Lambda=\{\infty<_\Lambda s <_\Lambda\dots<_\Lambda w_3<_\Lambda w_2<_\Lambda w_1<_\Lambda w_0\}$ be the set of outcomes, and let $T=\Lambda^{<\omega}$ be our tree of strategies. The construction will be performed in $\omega$ many stages $s$.

We define the \textit{current true path} $\pi_s$, the longest strategy eligible to act at stage $s$, inductively. For every $s$, $\lambda$, the empty string is eligible to act at stage $s$. Suppose the strategy $\alpha$ is eligible to act at stage $s$.
(We call such a stage an $\alpha$-stage.)
If $|\alpha|<s$, then follow the action of $\alpha$ to choose a successor $\alpha^\frown\<o\>$ on the current true path based on the outcome of that action (each type of action will be associated with an outcome below). 
If $|\alpha|=s$, then set $\pi_s=\alpha$. For all strategies $\beta$ such that $\pi_s <_L\beta$, initialize $\beta$ (i.e., set all parameters associated with $\beta$ to be undefined). If $\beta <_L \pi_s$ and $|\beta|<s$, then $\beta$ retains the same values for its parameters.

\subsection{Informal Description of the Construction and Interactions between different $P$-strategies}\label{section: interactions between P strategies}
We begin with some high-level descriptions of the strategies and their outcomes.

The $S$ strategies can take any outcome excluding $s$.
The $w_k$ correspond to waiting outcomes.
In particular, $S_i$ will try to match up the components of $M_i$ with the currently constructed components of $\A$.
The outcome $w_k$ means that it is waiting to find the $k^{th}$ equivalent component in $M_i$.
If one of the $w_k$ is a true outcome, it means that $S_i$ was never able to match up the components, so $M_i\not\cong\A$.
The $\infty$ outcome corresponds to the construction of a computable map between $M_i$ and $\A$.
Every time that $S_i$ makes progress in matching components, it will report the $\infty$ outcome.
If the $\infty$ outcome is true, it will turn out that $M_i\cong\A$ and we have an appropriate computable map witnessing this.

The $P$ strategies are a bit more complicated.
For the formal description of the $P$-strategy, please see Section \ref{formal strategy: P}.
They will use all of the outcomes.
The basic desire of a $P_e$ strategy is to find two components in $\A$ and $\B$ that look like they are matched up by $\Phi_e$ and, when permitted by $D$, switch the isomorphism type of these components (add diagonalizing loops) in $\B$ to make $\Phi_e^D$ wrong.
$D$ may not give permission on a given interval corresponding to some components, so $P_e$ will have to try many interconnected ways to get this diagonalizing situation.
The outcome $w_k$ means that $P_e$ is waiting for $\Phi_e^D$ to match up the elements associated with its $k^{th}$ attempt to diagonalize.
If this outcome is true, it will be because $\Phi_e^D$ fails to be an isomorphism, as this matching never occurs.
The outcome $s$ means success.
This outcome occurs when a successful diagonalization has been lined up and takes place.
If this is the true outcome, our intervention has worked, and so $\Phi_e^D$ fails to be an isomorphism.
The last outcome is $\infty$, which occurs when our approximation of $\Phi_e^D$ is demonstrated to be wrong, so we have to try again.
If this is the true outcome, $\Phi_e^D$ will consistently look like we need to take new action to diagonalize against it. 
In the end, however, $\Phi_e^D$ will not have a well-defined use on particular elements of $\A$, so it will not converge and will fail to be an isomorphism.

We will now detail some specific technical issues that arise as we have multiple $P$-strategies acting throughout the construction, and the solutions to those issues. 

 We begin by describing the $\infty$ outcome in more detail.
 Since we are defining permission intervals, the following can occur. Let $\alpha$ be a $P$-strategy and suppose that by stage $s-1$, it has defined its parameters $n^0_\alpha<n^1_\alpha<\dots<n^k_\alpha$ and its permission intervals $(u_{n^i_\alpha},v_{n^i_\alpha}]_{0\leq i<k}$. Suppose $D[s]\neq D[s-1]$ because there is a new number $j<u_{n^0_\alpha}$ that entered $D$ at the beginning of stage $s$. Since $j<u_{n^0_\alpha}$, this causes the computation $\Phi_e^D[s-1]$ on the $2n^0_\alpha$th and $(2n^0_\alpha+1)$st components of $\A$ to be undefined (in fact, it causes all computations $\Phi_e^D[s-1]$ on the $2n^i_\alpha$th and $(2n^i_\alpha+1)$st components of $\A$ for $0\leq i<k$ to be undefined). Since all these computations disappear, the permission intervals defined at stage $s-1$ are no longer valid. Additionally, the enumeration did not grant permission for $\alpha$ to switch the position of the diagonalizing loops in $\B$ (in fact, because $k$ was small enough, all these loops in $\B$ disappear at stage $s$ as well).

In this case, we have our $P$-strategies carry out the following actions:
\begin{enumerate}
    \item Keep all its parameters defined up to that point in the construction.
    \item Delete all permission intervals (as they are now all invalid).
    \item Reconstruct any nodes and edges in $\B$ which disappeared when $k$ entered $D$. Set the use of these nodes and edges to be the same use that they had at the previous $\alpha$-stage before the enumeration of $j$. For $\A$- and $\B$-components which possessed cycles of lengths $5n+3$ and $5n+4$ for some parameter $n$, we must do a quick homogenization step. That is, we add a cycle of length $5n+1$ to $a_{2n+1}$ and $b_{2n+1}$ and a cycle of length $5n+2$ to $a_{2n}$ and to $b_{2n}$. Note that the cycles of lengths $5n+3$ and $5n+4$ now differentiate the connected components with root nodes $a_{2n}$ and $a_{2n+1}$ (and their respective counterparts in $\B$).
    \item When eligible to act again, proceed to carry out the strategy starting with $n^0$ and take the $w_0$ outcome. Because the cycles of lengths $5n^m+3$ and $5n^m+4$ have been added to some of the $\A$ and $\B$-components for $0\leq m<k$, $\alpha$ will have to carry out its strategy with longer cycles on those affected components.
\end{enumerate}

Essentially, the actions in $(1)$ and $(2)$ allows $\alpha$ to ``start over''. Although $\alpha$ is utilizing some $\A$-components which may have cycles longer than $5n+1$ or $5n+2$ for some $n\in\omega$, they can carry out their strategies anew via $(3)$ and $(4)$. In this way, one can view this as $\alpha$ initializing itself (rather than being initialized by a higher priority strategy moving the current true path to the left). Throughout this write-up, we may refer to $\alpha$ taking the $\infty$ outcome as $\alpha$ initializing itself (and so $\alpha$ can either be initialized by a higher priority strategy moving the current true path to the left or via this new mechanism).

The $\infty$ outcome is precisely when $\alpha$ does the above actions in $(1)$-$(3)$ in response to a small enough $D$-enumeration. 

We also make the following related observation. Let $\alpha$ be a $P_e$-strategy and $\beta$ be a $P_{e'}$-strategy such that $\alpha^\frown\<\infty\>\subseteq\beta$. Then, every time $\alpha$ takes the $\infty$ outcome, $\beta$'s permission intervals (if they are defined) will remain unless it initializes itself by taking its $\infty$ outcome itself. Suppose $\alpha$ takes the $\infty$ outcome for the first time at a stage $s_0$, and so it initializes itself. Then, $\beta$ is eligible to act for the first time at stage $s_0$ and defines a parameter $n^0_\beta$ large (and so it must be larger than the maximum $v_{n^k_\alpha}$ defined by $\alpha$ before it initialized itself) and takes the outcome $w_0$. Now suppose that $\alpha$ is eligible to act again at a stage $s_1>s_0$, and so it defines a new large parameter $n^0_\alpha[s_1]>n^0_\beta[s_0]$. 

Notice that $\beta$ does not get to act again unless $\alpha$ takes the $\infty$ outcome again after stage $s_0$. If $\alpha$ does take the $\infty$ outcome again at a stage $s_2>s_1>s_0$, then it initializes itself again, and $\beta$ becomes eligible to act. When it does, it will now be working with $D[s_2]$ as its approximation of $D$ rather than $D[s_0]$. We also know that by the above, $D[s_0]\neq D[s_2]$. If $\beta$ can define parameters $n^k_\beta$ or permission intervals on $D[s_2]$, then the next time $\alpha$ is eligible to act at stage $s_3>s_2$, it will always define parameters and permission intervals to the right of what $\beta$ has defined by stage $s_3$. In particular, if $\alpha$'s progress is reset by a small enough $D$-enumeration, this $D$-enumeration will always be large enough so it does not affect $\beta$'s progress. Otherwise, $\beta$ is initialized itself by either having to take the $\infty$ outcome itself or the current true path moved to the left of $\beta$ in the tree because of an action from a higher priority strategy.
This is of particular interest in the case where the calculation associated with $\alpha$ is cofinally disrupted (i.e., $\alpha$ takes the infinite outcome in the $\liminf$) and the permission intervals get longer and longer over time.
Because of the above observations, there is no worry that a converging $\beta$ will be adversely affected by this, as eventually the disruptions to $\alpha$ will no longer injure $\beta$ as they will be too large.

We will now give formal descriptions of each strategy and their outcomes in the construction.

\subsection{Formal strategies}

\subsubsection{Global strategy}
For this construction, we assume that at any stage in the construction, at most one element can be enumerated into the set $D$. Let $s$ be the current stage of the construction. At the beginning of stage $s$, check if 
\[
D[s]=D[s-1].
\]
If so, proceed to the highest priority strategy and follow the action of all strategies along $\pi_s$. If $D[s]\neq D[s-1]$, then let $\ell$ be such that $\ell\in D[s]$ and $\ell\not\in D[t]$ for $t\leq s-1$.

\textbf{Case A}: If there is a (least) $P$-strategy $\alpha$ and $j\in\omega$ such that $\ell\in(u_{n^j_\alpha},v_{n^j_\alpha}]$, then follow the action of all strategies along $\pi_s$ until it is $\alpha$'s turn to act at stage $s$. When $\alpha$ acts, it will now be in \textbf{Subcase 2.4P} of its strategy (see near the end of Section \ref{formal strategy: P}).

\textbf{Case B}: If there is a (least) $P$-strategy $\alpha$ such that $\ell<u_{n^0_\alpha}$, then follow the action of all strategies along $\pi_s$ until it is $\alpha$'s turn to act. When $\alpha$ acts, it will now be in \textbf{Case 3P} of its strategy.

\subsubsection{$P_e$-strategies}\label{formal strategy: P}
Let $\alpha$ be a $P_e$-strategy eligible to act at stage $s$.
At any given stage, a $P_e$-strategy will have an increasing sequence of parameters $n^0<n^1<\dots<n^k$ that it has previously defined.
We will also have numbers $u_{n^j}$ and $v_{n^j}$, for $j\in\omega$, defined by $\alpha$ which we will call the \textit{associated $D$-uses}.
These numbers are associated with some $\Phi_e^D$ computation on a pair of $\A$-components (the ``uses'' $u_n$) or with newly added loops in $\B$ (the uses of these loops appearing, $v_n$).

\textbf{Case 1P}: If $\alpha$ is first eligible to act at stage $s$, define the parameter $n^0_\alpha=n^0$ to be large and take outcome $w_0$. The following components are then created in $\mathcal{A}[s]$ and in $\mathcal{B}[s]$ (See Figure 1):
            \begin{align*}
        a_{2n^0} : 2, \textcolor{red}{5n^0+1} & \ \ \ \ a_{2n^0+1} : 2, \textcolor{blue}{5n^0+2} \\
        \textcolor{darkgray}{b_{2n^0}}: 2, \textcolor{red}{5n^0+1} & \ \ \ \ \textcolor{darkgray}{b_{2n^0+1}} : 2, \textcolor{blue}{5n^0+2} .
    \end{align*}

    \begin{figure}[h]
   \centering
   \makebox[\textwidth][c]{\includegraphics[width=.7\textwidth]{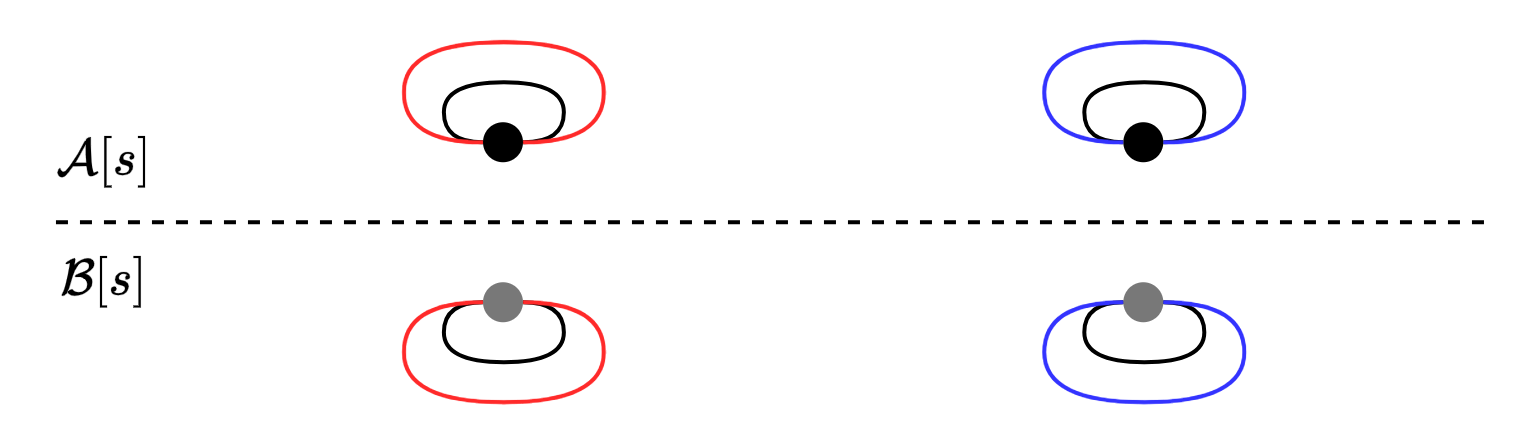}}
    \caption{}
    \end{figure}

In general, we will assume that every time a new parameter is defined the strategy constructs its associated components in $\mathcal{A}$ and $\mathcal{B}$.

\textbf{Case 2P}: When revisiting the strategy after \textbf{Case 1P} or if $\alpha$ took the $\infty$ outcome at the previous $\alpha$-stage, then suppose $\alpha$ has defined an increasing sequence of parameters $n^0<n^1<\dots<n^k$ for $k\geq 0$. We break into the following subcases.

\textbf{Subcase 2.1P}: 
$\alpha$ enters this subcase and proceeds to the description in the next paragraph if either $k=0$ or $\alpha$ defined the number $u_{n^{k-1}}$ at the previous $\alpha$-stage $t<s$ after $\Phi_e^D[t]$ mapped the $2n^{k-1}$th and $(2n^{k-1}+1)$st components of $\A$ correctly into $\B$.
Furthermore, we will require that $\alpha$ has not yet successfully defined $v_{n^{k-1}}$ as described in the next paragraph (intuitively, once we ``succeeded'' in this case for this $k$, we move on).
In this case, after defining $u_{n^{k-1}}$, $\alpha$ defines the parameter $n^k$ and takes the $w_k$ outcome at the end of stage $t$.
Check if $\Phi_e^D[s]$ maps the $2n^k$th and $(2n^k+1)$st components of $\A$ correctly into $\B$. 
If not, continue taking the $w_k$ outcome. 
If the map is correctly defined, proceed to the description in the next paragraph.

Define $u_{n^k}$ to be the maximum of the $D$-uses of the $\Phi_e^D[s]$ computation on the $2n^k$th and $(2n^k+1)$st components of $\A$.
Then, if $k>0$, add the $(5n^{k-1}+3)$-loop to $a_{2n^{k-1}}$ in $\A$ and to $b_{2n^{k-1}}$ in $\B$, and the $(5n^{k-1}+4)$-loop to $a_{2n^{k-1}+1}$ in $\A$ and to $b_{2n^{k-1}+1}$ in $\B$ and define the $D$-uses of these new loops in $\B$ to be a large unusued number $v_{n^{k-1}}$. Note that $v_{n^{k-1}}>u_{n^k}$.
After adding these new loops in $\A$, $\alpha$ now issues a challenge to all higher priority $S$-strategies $\beta$ such that $\beta^\frown\<\infty\>\subseteq\alpha$, and defines a new large parameter $n^{k+1}$.
Note that at this point, each of the challenged $S$ strategies has its parameter changed to equal $n^{k}$.
(We will discuss the parameters for $S$ strategies in detail later, but this essentially means that this strategy now only thinks it has matched components up to $n^{k}$ correctly.)
Take outcome $w_{k+1}$. The current configuration of the $2n^{k-1}$th components is (See Figure 2):

 \begin{align*}
        a_{2n^{k-1}} : 2, {5n^{k-1}+1} , \textcolor{red}{5n^{k-1}+3} & \ \ \ \ a_{2n^{k-1}+1} : 2, {5n^{k-1}+2}, \textcolor{blue}{5n^{k-1}+4} \\
        \textcolor{darkgray}{b_{2n^{k-1}}}: 2, {5n^{k-1}+1}, \textcolor{red}{5n^{k-1}+3} & \ \ \ \ \textcolor{darkgray}{b_{2n^{k-1}+1}} : 2, {5n^{k-1}+2}, \textcolor{blue}{5n^{k-1}+4}.
    \end{align*}

    \begin{figure}[h]
   \centering
   \makebox[\textwidth][c]{\includegraphics[width=.7\textwidth]{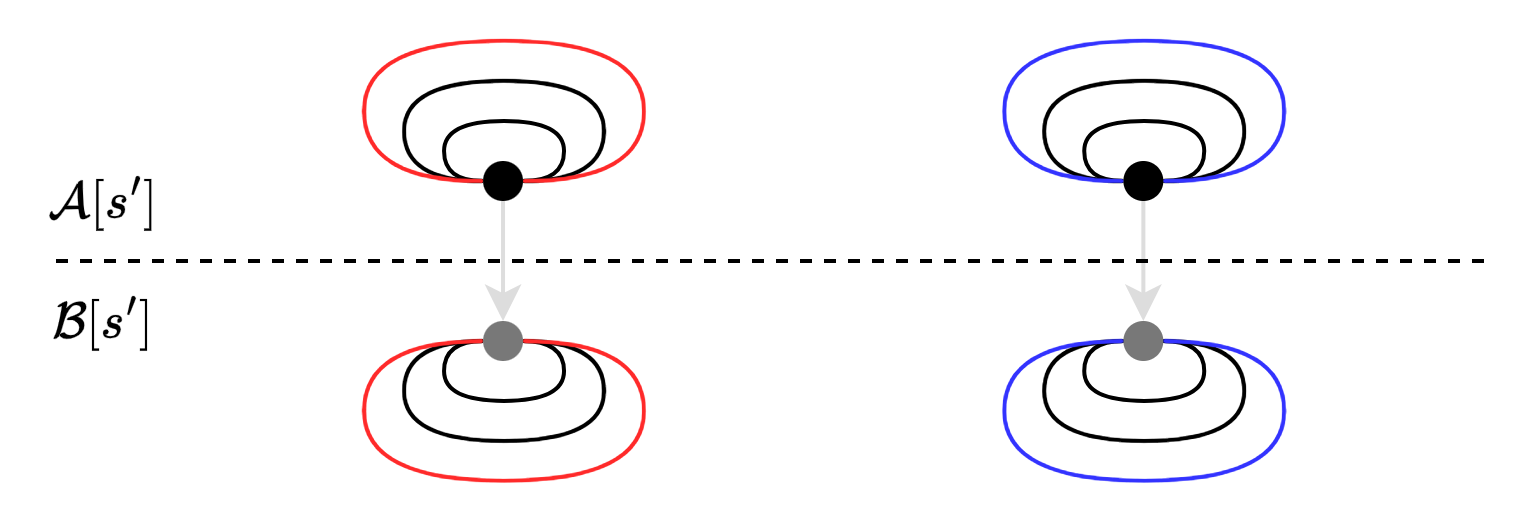}}
    \caption{}
    \end{figure}

In the case where $\alpha$ took the $\infty$ outcome at the previous $\alpha$-stage, then when we add new cycles to $\B$-components, we add cycles of lengths $5n^{k-1}+m+2$ where $5n^{k-1}+m$ was the last longest cycle length added to these $\B$-components. The $D$-uses of these new cycles, denoted by $v_{n^{k-1}}$ will be defined to be some large unused number.

\textbf{Subcase 2.2P}: Suppose $\alpha$ defined the number $v_{n^{k-2}}$ after it was able to define both $u_{n^{k-2}}$ and $u_{n^{k-1}}$ like in \textbf{Subcase 2.1P}.
In this case, $\alpha$ defined $n^k$, took outcome $w_k$ at the previous $\alpha$-stage $t<s$, and has not been initialized since stage $t$.
Since $\alpha$ can act again, then its challenge after adding new loops to $a_{2n^{k-2}}$ and to $a_{2n^{k-2}+1}$ was met by all higher priority $S$-strategies $\beta$ where $\beta^\frown\<\infty\>\subseteq\alpha$. $\alpha$ now sets up the permission interval for $n^{k-2}$, i.e., it now waits to see if $D[s']\restriction(u_{n^{k-2}},v_{n^{k-2}}]$ changes at any $\alpha$-stage $s'\geq s$. 
After setting up this permission interval, it also starts checking if $\Phi_e^D[s]$ maps the $2n^k$th and $(2n^k+1)$st components of $\A$ correctly into $\B$. 

If $\alpha$ does not see that $D[s]\restriction(u_{n^{k-2}},v_{n^{k-2}}]$ change or $\Phi_e^D[s]$ converge correctly on the $2n^k$th and $(2n^k+1)$st components of $\A$ by the end of stage $s$, continue to take the $w_k$ outcome. If $\alpha$ sees that $\Phi_e^D[s]$ converged correctly on the $\A$-components above, then define $u_{n^k}$ and a new large parameter $n^{k+1}$, and take the $w_{k+1}$ outcome.

\textbf{Subcase 2.3P}: If $\alpha$ took the waiting outcome $w_k$ at the previous $\alpha$-stage $t<s$, but no associated $D$-uses were defined during stage $t$, then continue to check if $\Phi_e^D[s]$ correctly maps the $2n^k$th and $(2n^k+1)$st components of $\A$ into $\B$. If not, continue to take outcome $w_k$. Otherwise, define $u_{n^k}$ to be the associated $D$-use for this computation, and define $v_{n^{k-1}}$ after adding the $(5n^{k-1}+3)$-loops to $a_{2n^{k-1}}$ in $\A$ and $b_{2n^{k-1}}$ in $\B$ and the $(5n^{k-1}+4)$-loops to $a_{2n^{k-1}+1}$ in $\A$ and $b_{2n^{k-1}+1}$ in $\B$. Define a new large parameter $n^{k+1}$ and issue a challenge to all higher priority $S$-strategies $\beta$ such that $\beta^\frown\<\infty\>\subseteq\alpha$, and take outcome $w_{k+1}$.

\textbf{Subcase 2.4P}: Suppose $\alpha$ took the waiting outcome $w_k$ at the previous $\alpha$-stage $t<s$. Suppose also that for some least $j<k$, we see that $D[s]\restriction(u_{n^j},v_{n^j}]\neq D[t]\restriction(u_{n^j},v_{n^j}]$. That is, permission has been granted for the parameter $n^j$. 

Since $D[s]\restriction(u_{n^j},v_{n^j}]\neq D[t]\restriction(u_{n^j},v_{n^j}]$, we have that the $(5n^j+3)$- and $(5n^j+4)$-loops disappeared in $\B$ while $\Phi_e^D[t]=\Phi_e^D[s]$ remains on the $2n^j$th and $(2n^j+1)$st components of $\A$. Add the $(5n^j+1)$- and $(5n^j+3)$-loops to $b_{2n^j+1}$ and the $(5n^j+2)$- and $(5n^j+4)$-loops to $b_{2n^j}$ in $\B$ and set the new $D$-use of these loops to be equal to the stage number $s$. Add the $(5n^j+2)$-loop to $a_{2n^j}$ and $(5n^j+1)$-loop to $a_{2n^j+1}$ in $\A$. Terminate all other progress for parameters $n'\neq n^j$, and take the success outcome $s$. The components will now have the following configuration (See Figure 3):

 \begin{align*}
        a_{2n^{k-1}} : 2, {5n^{k-1}+1} , {5n^{k-1}+2},  \textcolor{red}{5n^{k-1}+3} & \ \ \ \ a_{2n^{k-1}+1} : 2, {5n^{k-1}+1}, {5n^{k-1}+2}, \textcolor{blue}{5n^{k-1}+4} \\
        \textcolor{darkgray}{b_{2n^{k-1}}}: 2, {5n^{k-1}+1}, {5n^{k-1}+2}, \textcolor{blue}{5n^{k-1}+4} & \ \ \ \ \textcolor{darkgray}{b_{2n^{k-1}+1}} : 2, {5n^{k-1}+1}, {5n^{k-1}+2}, \textcolor{red}{5n^{k-1}+3}.
    \end{align*}

    \begin{figure}[h]
   \centering
   \makebox[\textwidth][c]{\includegraphics[width=.7\textwidth]{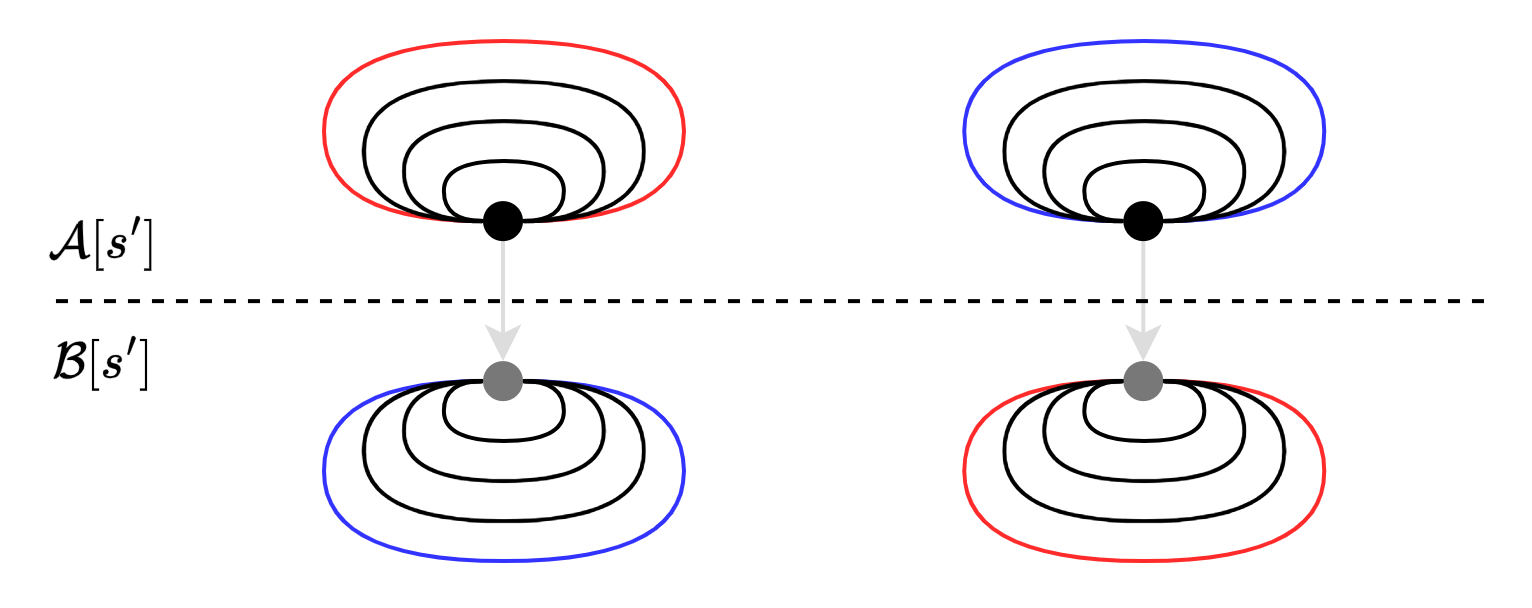}}
    \caption{}
    \end{figure}

Note that if permission is granted for $n^j$ and the associated $\A$- and $\B$-components already have cycles longer than $5n^j+1$ and $5n^j+2$, then carry out the action mentioned above with the differentiating cycles of lengths $5n^j+m+2$, where $5n^j+m$ is the longest cycle length attached only to $a_{2n^j}$ and not to $a_{2n^j+1}$ (and their respective counterparts in $\B$) and vice-versa.

\textbf{Case 3P}: Suppose $\alpha$ has defined
\[
n^0 < n^1 <\dots<n^k
\]
for $k\geq 0$ and the permission intervals 
\[
(u_{n^i},v_{n^i}]_{0\leq i<k}
\]
and took the waiting outcome $w_k$ at the previous $\alpha$-stage $t<s$. If we are in \textbf{Case B} of the global strategy, i.e., there is a small $k\in D[s]$ such that $k\not\in D[t]$ and $k<u_{n^0}$. In this case, $\alpha$ will retain the values of $n^0,\dots,n^k$ but delete the permission intervals $(u_{n^i},v_{n^i}]_{0\leq i<k}$. $\alpha$ will also reconstruct any nodes and edges in $\B$ that disappeared with the enumeration in \textbf{Case B} of the global strategy with the same use that they had before. $\alpha$ will also homogenize any $\A$- and $\B$-components as needed (see (3) in Section \ref{section: interactions between P strategies}). Take the $\infty$ outcome.

\textbf{Case 4P}: If $\alpha$ took outcome $s$ at the end of the previous $\alpha$-stage $t<s$, then continue to take outcome $s$ unless $\alpha$ is initialized.

\subsubsection{$S_i$-strategies}
Let $\alpha$ be an $S_i$-strategy eligible to act at stage $s$.

\textbf{Case 1S:} If $\alpha$ is acting for the first time or has been initialized since the last $\alpha$-stage, define the following:
\begin{itemize}
    \item $n_\alpha[s]=0$,
    \item $f_\alpha[s]$ to be the empty map, and
    \item $l_\alpha[s]=\max\{l : \text{there is an $l$-length loop in the $2n_\alpha$th and $(2n_\alpha+1)$st components\}}$
\end{itemize}
$l_\alpha[s]$ keeps track of the longest loop length found in all components worked on or currently being worked on by $\alpha$ at stage $s$. After defining these three things, take the $w_0$ outcome.

\textbf{Case 2S:} If $\alpha$ has defined $n_\alpha$ (i.e., $n_\alpha[s-1]$ is defined) and is currently challenged by a $P_e$-strategy $\beta$ where $\alpha^\frown\<\infty\>\subseteq\beta$, then $\alpha$ acts as follows. $\beta$ will issue a challenge to $\alpha$ only if $n_\alpha[s-1]>n_\beta$, where $n_\beta$ denotes the $\A$-components on which $\beta$ added diagonalizing loops to. After $\beta$ challenges $\alpha$, $\alpha$ searches for copies of all cycles of length at most $l_\alpha[s]$ in all $\A$-components $2k$, $2k+1$ for $k\leq n_\alpha[s-1]$. Note that $l_\alpha[s]$ now ranges over the diagonalizing loops added by $\beta$ before it issued its challenge. If no copies are found, do not extend $f_\alpha[s]$, leave $n_\alpha$ unchanged, and take the $w_{n_\alpha}$ outcome. Otherwise, extend $f_\alpha[s-1]$ to $f_\alpha$ appropriately, increment $n_\alpha$ by $1$ and take the $\infty$ outcome. In this case, declare $\beta$'s challenge to $\alpha$ has been met.

\textbf{Case 3S:} If $\alpha$ has defined $n_\alpha$ and is not currently challenged by any $P$-strategies, then $\alpha$ continues to search for copies of all cycles of length at most $l_\alpha[s]$ in all $\A$-components $2k$, $2k+1$ for $k\leq n_\alpha[s-1]$. If no copies are found, do not extend $f_\alpha[s-1]$, leave $n_\alpha$ unchanged, and take the $w_{n_\alpha}$ outcome. Otherwise, extend $f_\alpha[s-1]$ appropriately, increment $n_\alpha$ by $1$, and take the $\infty$ outcome.

\subsection{Verification}
We begin with the following lemmas.

\begin{lemma}
    If $f:\A\to\A$ is an embedding of $\A$ into itself, then $f$ is an isomorphism (and is the identity map).
\end{lemma}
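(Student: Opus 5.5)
The plan is to use the structure of $\A$ directly. $\A$ is a disjoint union of ``bouquet'' components. Each component has a root node $a_m$, and the only cycles in it are simple directed cycles through that root. So the isomorphism type of a component is exactly the set (with multiplicity) of cycle lengths attached to its root. The proof has three steps: show that an embedding sends roots to roots and cycles to cycles of the same length, then show that no component can be sent to a different component, and finally show that the map restricted to each component is the identity.

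\textbf{Step 1: roots and cycles are preserved.} Every component contains a $2$-cycle and at least one longer cycle, all through the root. Hence the root is the unique vertex of its component lying on two distinct cycles. An embedding $f$ is injective and preserves edges and non-edges, so the image of a directed cycle of length $\ell$ is a directed cycle of length $\ell$. The image of the root lies on two distinct cycles, so it must be a root $a_{m'}$. Since every cycle in a component passes through the root, $f$ maps the whole component of $a_m$ into the component of $a_{m'}$, and it sends the cycles at $a_m$ injectively to cycles at $a_{m'}$ of the same lengths.

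\textbf{Step 2: components go to themselves ($m'=m$).} By construction, the components with indices $2n$ and $2n+1$ start with cycles of lengths $2$ and $5n+1$, respectively $2$ and $5n+2$. Every cycle added later to either of them has length in $\{5n+1,\dots\}$, in the ranges $5n+j$ produced by the homogenization and diagonalization steps. The key point is that cycles of these lengths are only ever attached to components $2n$ and $2n+1$. This needs to be checked against the construction: large parameters $n$ are chosen fresh, and the lengths used for $n$, namely $5n+m$ with $m$ growing by $2$ each reset, must not collide with $5n'+1$ for a larger parameter $n'$. I expect this bookkeeping to be the main obstacle. If the ranges could collide, I would argue instead that the \emph{smallest} non-$2$ cycle length, $5n+1$ or $5n+2$, identifies $n$ uniquely, since it is the minimum length present apart from $2$. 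With that in hand, $f$ must map component $2n$ or $2n+1$ into component $2n$ or $2n+1$. Finally, we rule out the swap. Component $2n$ always carries the cycle $5n+1$, and component $2n+1$ acquires it only at diagonalization time. At that moment the two $\A$-components become distinguished by the last differentiating cycles ($5n+3$ versus $5n+4$, or their shifted versions $5n+m+2$): each of them has a cycle length the other lacks. Sending a cycle injectively to a cycle of the same length is then impossible across the swap, so $m'=m$.

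\textbf{Step 3: the map is the identity.} Now $f$ maps each component injectively into itself, root to root, and each cycle at the root to a cycle of the same length. Each component has only finitely many cycles of each length, so this map on cycles is a bijection. Hence $f$ is surjective on each component, and therefore an isomorphism. If every cycle length occurs at most once per component, which I would verify from the construction, then each cycle is mapped to itself. Since $f$ fixes the root and must respect the unique directed path order around a cycle, it fixes every vertex, so $f$ is the identity.
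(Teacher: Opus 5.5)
\textbf{There is a genuine gap in the first half of Step 2.} The bookkeeping you hope to verify is false. In the situation of Lemma \ref{fact: some components are infinite now}, the roots of pair $n$ end up carrying every length $5n+k$, as the paper itself describes. That includes $5n'+1$ and $5n'+2$ for every larger parameter $n'$. So you are forced onto your fallback, and its key inference does not go through. An embedding of a component $C$ into a component $C'$ only requires the cycle lengths of $C$ to occur in $C'$. For the least non-$2$ length $\mu$ this gives $\mu(C')\le\mu(C)$, not equality. Knowing that $\mu$ determines $n$ therefore says nothing about where $C$ lands. For example, an unchanged component of a later pair embeds into an infinite component of pair $n$.

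The missing idea is a global counting step. Since distinct roots go to distinct roots, $f$ induces an injection $\sigma$ on components. For each $b$, $\sigma$ maps the finite set $\{C:\mu(C)\le b\}$ into itself, hence onto itself. Induction on $b$ then shows that $\sigma$ preserves $\mu$, and hence preserves each pair $\{2n,2n+1\}$.

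\textbf{The second half of Step 2 (ruling out the swap) also fails in one case.} It is the paper's finite-case argument, and it only works when the pair carries finitely many cycles. For an infinite pair there is no ``last'' differentiating cycle. Both roots carry all lengths $5n+k$, so the two components are isomorphic, as the paper notes. Exchanging them is then a nontrivial automorphism of $\A$. So the parenthetical ``identity'' claim is false there, and no argument in Step 2 or Step 3 can establish it. The paper's proof does not handle this case either: it asserts $f(a_{2n})=a_{2n}$ immediately after observing the isomorphism. Separately, your Step 3 identity claim needs each length to occur at most once per component, which the homogenization steps do not obviously guarantee.

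What is true, and all that Lemma \ref{lemma: surjective embedding is an isomorphism} needs, is that $f$ is an isomorphism. Your Step 3 counting argument delivers this, provided you also run it across a possible swap. Writing $C_m$ for the component of $a_m$, the chain $C_{2n}\hookrightarrow C_{2n+1}\hookrightarrow C_{2n}$ forces equal finite numbers of $\ell$-cycles in both components for every $\ell$. Hence each length-preserving injection is onto. This surjectivity argument is more than the paper provides, since it never argues surjectivity for components with infinitely many cycles.
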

\begin{proof}
    Let $f:\A\to\A$ be an embedding. Since embeddings preserve loops and only the root nodes $a_n$ are contained in more than one loop, $f$ must map root nodes to root nodes. We first consider the cases where the $\A$-components have only finitely many cycles attached to each root node.

    In the case where a pair of $\A$-components remain unchanged, that is, they look like the following:
    \begin{align*}
       a_{2n} : 2, 5n+1, \\
       a_{2n+1} : 2, 5n+2;
    \end{align*}

    then it must be the case that $f(a_{2n})=a_{2n}$ and $f(a_{2n+1})=a_{2n+1}$. Otherwise, if only finitely many longer cycles were added to the root nodes, such as in the following configuration for some finite $k>3$:
        \begin{align*}
       a_{2n} : 2, 5n+1, 5n+2, 5n+3,\dots,5n+(k-1),5n+k, \\
       a_{2n+1} : 2, 5n+1, 5n+2, 5n+3,\dots,5n+(k-2),5n+(k+1);
    \end{align*}
    then the cycle of length $5n+k$ is only attached to $a_{2n}$ and the cycle of length $5n+(k+1)$ is only attached to $a_{2n+1}$. This forces $f(a_{2n})=a_{2n}$ and $f(a_{2n+1})=a_{2n+1}$.
    
    We now consider the case where a pair of $\A$-components have infinitely many cycles attached to each root node. That is, $a_{2n}$ and $a_{2n+1}$ are attached to cycles of lengths $5n+k$ for $k\geq 1$. We have that the two components are then isomorphic to each other, and so we have that $f(a_{2n})=a_{2n}$ and $f(a_{2n+1})=a_{2n+1}$. In all three cases, since $\A$ is a directed graph, it must map the loops attached to $a_{2n}$ and to $a_{2n+1}$ identically onto themselves.
\end{proof}

\begin{lemma}\label{lemma: surjective embedding is an isomorphism}
    If $\A\cong\mathcal{M}_i$ where $\mathcal{M}_i$ is a computable directed graph and $f_i:\A\to\mathcal{M}_i$ is an embedding, then $f_i$ is an isomorphism.
\end{lemma}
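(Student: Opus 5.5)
The plan is to reduce to the preceding lemma, which says every self-embedding of $\A$ is the identity and in particular surjective. Since $\A\cong\mathcal{M}_i$, fix a classical isomorphism $g:\mathcal{M}_i\to\A$; its computability is irrelevant here. Given an embedding $f_i:\A\to\mathcal{M}_i$, the composite $g\circ f_i:\A\to\A$ is an embedding, being a composition of an embedding with an isomorphism. By the previous lemma, $g\circ f_i$ is an isomorphism of $\A$ onto itself (indeed the identity), and in particular it is surjective.

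From here the argument is short. Because $g$ is a bijection, $f_i=g^{-1}\circ(g\circ f_i)$ is a composition of two isomorphisms, so $f_i$ is surjective onto $\mathcal{M}_i$ and is an isomorphism. Equivalently, $f_i$ is already injective and preserves the edge relation and non-edges in both directions, so surjectivity is the only missing property, and it follows from the surjectivity of $g\circ f_i$ together with the injectivity of $g$.

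The main obstacle is really inside the preceding lemma, specifically its treatment of components that receive infinitely many loops: one must be sure that an embedding cannot move a root $a_{2n}$ to some other root. The statement here relies on that lemma exactly as stated, so I will simply invoke it. If more care is wanted, I would recheck that the loop lengths $5n+k$ ($k\ge 1$) belonging to different $n$ never overlap in a way that allows a nonidentity root map. For example, when $k\ge 5$, the length $5n+k$ coincides with $5(n+1)+(k-5)$, which could let one component embed into another. Settling this is the step that ultimately needs the paper's specific bookkeeping of loop lengths, but for the present lemma the reduction above is complete.
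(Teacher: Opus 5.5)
Your reduction is correct and is exactly the paper's argument: the paper's proof is the single line that the lemma follows from the previous one, and composing with a classical isomorphism $g:\mathcal{M}_i\to\A$ and writing $f_i=g^{-1}\circ(g\circ f_i)$ is precisely how that line unpacks. The overlap of loop lengths you flag affects only the preceding lemma, where the clause actually at risk is ``is the identity'' (two isomorphic infinite components can be swapped). The surjectivity you use still holds there: a root can only embed into a root whose least cycle length other than $2$ is no larger, and injectivity on roots then forces each pair $a_{2n},a_{2n+1}$ to be mapped onto itself.
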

\begin{proof}
    This follows directly from the previous lemma.
\end{proof}

We now state and prove important observations about the construction.

\begin{lemma}
    An $S_i$-strategy can be challenged by at most one $P_e$-strategy at any given stage.
\end{lemma}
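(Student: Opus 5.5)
The plan is to use the fact that challenges are issued only during the action of a $P$-strategy that lies on the current true path $\pi_s$. I will combine this with the rule that a stage ends once a strategy of length $s$ is reached, so a single path is traversed per stage.

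First, fix a stage $s$ and an $S_i$-strategy $\alpha$. I will check from the formal description of the $P$-strategies that a challenge to $\alpha$ is issued only in Subcases 2.1P and 2.3P. In each, the issuing $P_e$-strategy $\beta$ is eligible to act at stage $s$, so $\beta\subseteq\pi_s$. A challenge is also issued only to strategies $\alpha$ with $\alpha^\frown\<\infty\>\subseteq\beta$.

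Second, suppose two distinct $P$-strategies $\beta_1,\beta_2$ both challenge $\alpha$ at stage $s$. Both are initial segments of $\pi_s$, so they are comparable, say $\beta_1\subsetneq\beta_2$. When $\beta_1$ issues a challenge in Subcase 2.1P or 2.3P, it takes outcome $w_{k+1}$, so $\beta_1^\frown\<w_{k+1}\>\subseteq\pi_s$. I will then argue that $\beta_2$ cannot challenge $\alpha$ at the same stage. The reason is that $\alpha$ can meet a challenge, and report $\infty$ so that strategies below $\alpha^\frown\<\infty\>$ act again, only when $\alpha$ itself acts, and $\alpha$ is above both $\beta_1$ and $\beta_2$ on the path. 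So at stage $s$, $\alpha$ has already acted before either $\beta$ acts. The challenge of $\beta_1$ is registered after $\alpha$'s action. The strategy $\beta_2$, which lies below $\beta_1^\frown\<w_{k+1}\>$, acts later in the same stage.

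To rule out this remaining case, I will use the convention implicit in Case 2S: a challenged $S$-strategy is \emph{currently challenged} until it meets the challenge at a later $\alpha$-stage. So I will interpret the lemma as saying that the set of outstanding challenges to $\alpha$ has size at most one. I will then show the following: while $\beta_1$'s challenge is pending, it waits in Subcase 2.2P and takes a $w$ outcome. Any $P$-strategy extending $\beta_1^\frown\<w_{k+1}\>$ that issues a challenge must have defined its diagonalizing loops on components with parameter larger than $n_\alpha$. Recall that Case 2S says $\beta$ challenges $\alpha$ only if $n_\alpha[s-1]>n_\beta$, and that $\beta_1$'s challenge reset $n_\alpha$ to $n^{k}_{\beta_1}$. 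Any lower-priority $\beta_2$ was initialized or chose its parameters large after $\beta_1$'s parameters. Hence $n_{\beta_2}>n_\alpha$ at that point, so $\beta_2$ issues no challenge to $\alpha$. Symmetrically, if $\beta_2$'s challenge were pending first, then $\beta_1$'s action at the stage it challenges moves to outcome $w_{k+1}$ past new loops. The loops $\beta_2$ added were built with parameters larger than those of $\beta_1$. Resetting $n_\alpha$ to $n^{k}_{\beta_1}<n_{\beta_2}$ subsumes $\beta_2$'s challenge, which is then treated as overridden.

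The main obstacle is this parameter comparison, namely verifying that $n_{\beta_2}>n_\alpha$ whenever $\beta_1$'s challenge is pending. It needs the fact that lower-priority strategies choose parameters ``large'' relative to everything defined so far, together with the bookkeeping in Case 3P for self-initialization. I will handle it by induction on stages, tracking $n_\alpha$ against the parameters of the strategies extending $\alpha^\frown\<\infty\>$.
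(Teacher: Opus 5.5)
There is a genuine gap. You never use the fact that actually makes the lemma true: a challenged $S$-strategy takes a \emph{finite} outcome until the challenge is met.

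Once $\beta_1$ challenges $\alpha$ at stage $s$, $\alpha$ is in Case 2S at every later $\alpha$-stage until it finds the new loops. At each such stage it leaves $n_\alpha$ unchanged and takes $w_{n_\alpha}$. Only strategies $\gamma$ with $\alpha^\frown\<\infty\>\subseteq\gamma$ can challenge $\alpha$, and no such $\gamma$ is eligible at a stage where $\alpha$ takes $w_{n_\alpha}$. So no second challenge can be issued before the first is met. At stage $s$ itself, everything below $\beta_1^\frown\<w_{k+1}\>$ has just been initialized, because the path moved left. Those strategies are in Case 1P and only pick new parameters. This is the paper's proof.

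Your claim that $\beta_1$ ``waits in Subcase 2.2P'' while its challenge is pending misreads the construction. $\beta_1$ is not visited at all during that period, and Subcase 2.2P is entered precisely because the challenge has been met.

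Your substitute is the comparison $n_{\beta_2}>n_\alpha$ via the side condition $n_\alpha[s-1]>n_\beta$ of Case 2S. It only controls strategies below $\beta_1^\frown\<w_{k+1}\>$. It says nothing about a $P$-strategy $\gamma\supseteq\alpha^\frown\<\infty\>$ that is a proper ancestor of $\beta_1$. Nor does it cover a $\gamma$ to the left of $\beta_1$ that kept its parameters. For example, take $\gamma$ below $\delta^\frown\<\infty\>$ for some $P$-strategy $\delta\supseteq\alpha^\frown\<\infty\>$ that self-initialized and then returned to $w_0$, with $\beta_1\supseteq\delta^\frown\<w_0\>$. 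Such a $\gamma$ can have all its parameters below $n^k_{\beta_1}=n_\alpha$, so the side condition does not block it; only the finite-outcome argument does.

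In your ``symmetric'' case you also concede that $\beta_1$ could challenge while $\beta_2$'s challenge is outstanding, and you resolve this by declaring $\beta_2$'s challenge ``overridden.'' No such rule is in the construction. Under your own reading of the lemma (at most one outstanding challenge), that scenario would be a counterexample, not a case of the proof. In fact it never arises, because $\beta_1$ is not eligible while $\alpha$ sits in Case 2S. With the finite-outcome argument, both holes disappear, and the planned induction tracking $n_\alpha$ against parameters becomes unnecessary.
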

\begin{proof}
    Let $\alpha$ be an $S_i$-strategy and suppose there exists some $P_e$-strategy $\beta$ where $\beta\supseteq\alpha^\frown\<\infty\>$ and $\beta$ challenges $\alpha$ after adding new loops to the $2n^k_\alpha$th and $(2n^k_\alpha+1)$st components of $\A$ and $\B$ (during \textbf{Subcase 2.1P} or \textbf{Subcase 2.3P}). 
    If $\beta$ challenges $\alpha$ at a stage $s$, then $\beta$ takes the $w_{k+1}$ outcome for the first time after also defining a large unused parameter $n^{k+1}_\beta$.
    In particular, $\beta$ will initialize lower priority strategies as it has moved the current true path to the left.
    Strategies extending $\beta^\frown\<w_{k+1}\>$ will now be in \textbf{Subcase 1P} and define new large parameters at stage $s$, and so none will challenge $\alpha$.
    We now check what happens after stage $s$.
    Until $\alpha$ can match the newly added loops in $\A$ to their copies in $\B$ (if any), so it will remain in \textbf{Case 2S}.
    During this time, $\alpha$ will take outcome $w_{n^k_\beta}$.
    Hence, while $\alpha$ remains in \textbf{Case 2S}, all active $P$-strategies $\gamma$ will have $\gamma\supseteq\alpha^\frown\<w_{n^k_\alpha}\>$, and so will not challenge $\alpha$ since $w_{n^k_\alpha}\neq\infty$.
    $\alpha$ only leaves \textbf{Case 2S} once the challenge has been resolved, therefore demonstrating that a second challenge cannot be issued until the first one is resolved, as desired.
\end{proof}

\begin{lemma}\label{lemma: only finitely many permission intervals}
    A $P_e$-strategy $\alpha$ will define at most finitely many valid (permission intervals that remain after a self-initialization as described in \textbf{Case 3P}) permission intervals throughout the construction.
\end{lemma}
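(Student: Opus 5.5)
The plan is to argue by contradiction, using that $D$ is noncomputable. I read ``valid'' permission intervals as those that survive forever, i.e.\ those defined after the last self-initialization of $\alpha$ via \textbf{Case 3P}; if $\alpha$ is initialized from the left only finitely often, it suffices to consider $\alpha$ after its last such initialization. Fix such a stage $s_0$, and suppose that after $s_0$ the strategy $\alpha$ defines infinitely many permission intervals $(u_{n^j},v_{n^j}]$, $j\in\omega$, none of which is later deleted by \textbf{Case 3P} and none of which ever receives permission (if one receives permission, $\alpha$ enters \textbf{Subcase 2.4P}, takes outcome $s$, and by \textbf{Case 4P} defines no further parameters, so only finitely many intervals ever exist).

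\textbf{Step 1.} Check the basic monotonicity: each $n^{j+1}$ is chosen large, and $v_{n^j}$ is chosen large after $u_{n^{j+1}}$ is defined. Hence $u_{n^0}<v_{n^0}$, the sequence of intervals is increasing, and $u_{n^j}\to\infty$, since uses are bounded below by the stages at which they are set up. I will also record the stage $t_j$ at which $(u_{n^j},v_{n^j}]$ is set up in \textbf{Subcase 2.2P}; the map $j\mapsto t_j$, and hence $j\mapsto(u_{n^j},v_{n^j}]$, is computable from the (computable) construction.

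\textbf{Step 2.} This step supplies the computable procedure for $D$, and it is where I expect the main obstacle. The claim to establish is: for $x$, find the least $j$ with $u_{n^j}\ge x$; then $x\in D$ iff $x\in D[t_j]$. If some $x\le u_{n^j}$ entered $D$ after $t_j$, there are two cases.
\begin{enumerate}
\item If $x<u_{n^0}$, then \textbf{Case B} and \textbf{Case 3P} would delete all intervals, contradicting the validity of the intervals.
\item If $x$ lies in or below some later interval, then either $x$ falls in some $(u_{n^i},v_{n^i}]$, in which case \textbf{Case A} grants permission (contradiction), or $x$ lies in a gap $(v_{n^{i}},u_{n^{i+1}}]$ for some $i<j$.
\end{enumerate}
The gap case is the delicate one. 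There, $x$ changes the $\Phi_e^D$ computation on components $2n^{i+1},2n^{i+1}+1$ without touching the earlier intervals. I would handle it by arguing that in \textbf{Subcase 2.1P} the intervals are effectively contiguous in the relevant sense: $v_{n^{i}}$ is chosen after $u_{n^{i+1}}$, so actually $u_{n^{i+1}}<v_{n^i}$. The union of $[0,u_{n^0}]$ and the intervals $(u_{n^i},v_{n^i}]$ therefore covers an initial segment of $\omega$, and there is no gap. If this covering fails for intervals re-established after homogenization, I would fall back on the observation from Section~\ref{section: interactions between P strategies} that such a change only disrupts computations above $n^i$, and argue that this occurs only finitely often for a fixed $x$.

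\textbf{Step 3.} With Step 2 in hand, $D$ is computable, contradicting the hypothesis on $D$. Hence after $s_0$ only finitely many valid permission intervals are defined. Each self-initialization deletes all current intervals, so the intervals that remain are exactly those defined after the last self-initialization. If instead $\alpha$ self-initializes infinitely often, then no interval is permanently valid at all, and the count is trivially finite (zero). This yields the lemma.
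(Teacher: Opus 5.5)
Your proposal is correct and follows essentially the paper's argument. After the last initialization the intervals form a computable list; the interleaving $u_{n^{i+1}}<v_{n^i}$ from \textbf{Subcase 2.1P} makes them cover everything above $u_{n^0}$; and any later change of $D$ inside an interval forces \textbf{Subcase 2.4P} and the permanent outcome $s$, so $D$ would be computable. Your fallback for intervals re-established after homogenization is not needed: even after a self-initialization, $v_{n^{k-1}}$ is chosen as a fresh large number after $u_{n^k}$ is defined, so the interleaving and the covering still hold.
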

\begin{proof}
    Suppose there is some $P_e$-strategy $\alpha$ such that it defines an infinitely long increasing sequence of parameters,
    \[
    n^0 < n^1 <\dots<n^k<\dots,
    \]
    and also an infinitely long increasing sequence of associated $D$-uses which form currently valid permission intervals,
    \[
    u_{n^0}<u_{n^1}<\dots<u_{n^k}<\dots
    \]
    \[
    v_{n^0}<v_{n^1}<\dots<v_{n^k}<\dots
    \]
    $\alpha$ cannot be initialized infinitely many times, otherwise it would only have finitely many stages between each initialization.
    It would then never be able to define an infinite sequence of associated $D$-uses, as each stage can only define at most one $D$-use.
    By this observation, we can assume that all of these $D$-uses are defined after the point where $\alpha$ is never initialized again.
    This means that the above sequences are all computable, as they are defined in order after some non-uniformly determined point in the described computable tree procedure.
    
    Let $m\in\omega$ be arbitrary. Using the sequences listed above, we can computably decide whether $m$ is in $D$ or not.
    Find the least pair $u_{n^k}$ and $v_{n^k}$ such that $u_{n^k}<m<v_{n^k}$.
    We know there must exist such a pair since there are infinitely many listed above in increasing, interleaved order.
    (The $u$ and $v$ sequences are interleaved as a result of the construction described in \textbf{Subcase 2.1P}.)
    Let $s_k$ be the stage at which $\alpha$ sets up this permission interval.
    Note that if $\alpha$ ever enters \textbf{Subcase 2.4P} after $s_k$ it will take the outcome $s$.
    Because $\alpha$ is never initialized at this point, it will continue taking the outcome $s$ forever and stay in \textbf{Case 4P} forever.
    In particular, it will never define new permission intervals.
    Since $\alpha$ defines infinitely many permission intervals, we know that this is not the case.
    In other words, $\alpha$ can never enter \textbf{Subcase 2.4P}.
    By construction, this means that
    \[
    D[s_k]\restriction(u_{n^k},v_{n^k}]=D\restriction(u_{n^k},v_{n^k}],
    \]
    as a change in $D\restriction(u_{n^k},v_{n^k}]$ makes the strategy enter \textbf{Subcase 2.4P}.
    Thus, we can check if $m\in D$ or not by stage $s_k$.
    However, $D$ is a noncomputable c.e.\ set and we have just described a procedure to determine membership in $D$, yielding the desired contradiction.
\end{proof}
 
We draw specific attention to $P_e$-strategies $\alpha$ that take the $\infty$ outcome infinitely often throughout the construction.
They must also define at most finitely many permission intervals before they initialize themselves.
Once they initialize themselves, their previous permission intervals are no longer valid due to the $D$-enumeration, which caused $\alpha$ to take the $\infty$ outcome.
In other words, they act just like any other interval that is initialized infinitely often.

Before we state the next lemma, we introduce the following notation. Suppose $\alpha$ is a $P$-strategy, then let $u_{n^0_\alpha}=u_0$ and let $u_0[s]$ denote the value of $u_0$ at stage $s$.


    

\begin{lemma}\label{lemma: u_0 grows large with every initialization}
    Let $\alpha$ be a $P_e$-strategy such that $u_{n^0_\alpha}=u_0$ is defined at stage $t$ of the construction. Suppose $\alpha$ initializes itself at a stage $t'>t$ by taking the $\infty$ outcome. If $\alpha$ ever redefines $u_0$ at a later stage $t''>t'$, then $u_0[t'']\geq u_0[t]$.
\end{lemma}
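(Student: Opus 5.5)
The argument tracks the component on which $u_0$ is computed and uses the fact that a self-initialization caused by a small $D$-enumeration forces any new convergence on that component to use a larger oracle segment. Since $\alpha$ initializes itself at stage $t'$ via \textbf{Case 3P}, the parameter $n^0_\alpha$ is retained (item (1) of the self-initialization procedure). So any redefinition of $u_0$ at $t''>t'$ is again the maximum $D$-use of a $\Phi_e^D[t'']$ computation on the same $2n^0$th and $(2n^0+1)$st components of $\A$. (If $\alpha$ is instead initialized by the true path moving left, $n^0$ becomes large and new. In that case the new computation's use is at least as large as a fresh large number, and I would dispose of it separately, or note that the lemma is only claimed for self-initialization.) The goal is therefore to compare the uses of two convergent computations of $\Phi_e^D$ on the same finite set of elements at stages $t$ and $t''$.

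The key fact comes from \textbf{Case B} of the global strategy. Self-initialization at $t'$ happens because some $\ell$ entered $D$ at stage $t'$ with $\ell<u_0[t]$, where $u_0[t]$ is still the current value of $u_0$ at $t'$. Since $D$ is c.e., $\ell\in D[t'']$ for all $t''\geq t'$. Hence $D[t'']\restriction u_0[t]\neq D[t]\restriction u_0[t]$, and the oracle the stage-$t$ computation used has been permanently destroyed.

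Now suppose for contradiction that $u_0[t'']<u_0[t]$. I want to show that $\ell<u_0[t'']$ must hold. First take the case where the computation at $t''$ outputs the same values as at $t$, and where, under the standard convention, $u_0[t]$ was the use of the least such convergent computation. Then the oracle $D[t'']\restriction u_0[t'']$ would also have been available at stage $t$ (it agrees with $D[t]$ below $\ell$ provided $u_0[t'']\leq\ell$), which contradicts minimality. If the outputs differ, I would use that $\B$ is reconstructed after \textbf{Case 3P} with the same uses as before, so the targets in $\B$ are the same elements, and ``maps correctly'' pins down the same values. So the only consistent possibility is $u_0[t'']>\ell$, meaning the computation queries the changed bit.

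The main obstacle is making this use-monotonicity rigorous. Uses of $\Phi_e$ computations need not grow merely because the oracle changed: a new computation could converge with a short use avoiding $\ell$. The cleanest route, which I would try first, is to adopt the standard hat-trick convention that $\Phi_e^D[s](x)$ is declared divergent unless its use is consistent with the least element enumerated since the last expansionary stage. Alternatively, I would observe that if a computation with use $\leq\ell$ had existed, it would already have been seen at stage $t$ and recorded as $u_0[t]\leq\ell$, contradicting $\ell<u_0[t]$. This gives $u_0[t'']\geq u_0[t]$, and I expect handling this convention carefully to be the delicate point.
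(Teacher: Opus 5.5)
Your reduction is correct and agrees with the paper. \textbf{Case 3P} retains $n^0$, so $u_0[t'']$ is again the maximal use of $\Phi_e^D$ on the $2n^0$th and $(2n^0+1)$st components. The number $\ell<u_0[t]$ enumerated at $t'$ also stays in $D$.

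The gap is the last step. Your determinism argument says that a computation with use at most $\ell$ sees the same oracle answers as the stage-$t$ computation, so it coincides with it. That yields at best $u_0[t'']>\ell$, and even this needs that nothing below $\ell$ enters $D$ between $t$ and $t''$. Nothing in the proposal excludes $\ell<u_0[t'']<u_0[t]$, so the closing ``This gives $u_0[t'']\geq u_0[t]$'' does not follow.

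In the raw oracle-machine model the inequality can genuinely fail. Suppose that on some element $x$ of these components $\Phi_e$ first queries $\ell$. If $\ell\notin D$, it also queries $u_0[t]-1$ before giving its output; if $\ell\in D$, it gives the same output at once. If every other element has use at most $\ell$ and $\ell+1<u_0[t]$, then $u_0$ drops from $u_0[t]$ to $\ell+1$.

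The hat trick does not repair this. It only governs at which stages a computation is believed, not how large later uses are. The danger you name, a new computation ``avoiding $\ell$'', is the one case determinism already rules out. The real problem is a use strictly between $\ell$ and $u_0[t]$.

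The missing ingredient is exactly what the paper's proof appeals to with the words ``by use convention''. The convention is that the recorded use of $\Phi_e^D$ on a fixed set of elements is nondecreasing in the stage. This can be assumed without loss, e.g.\ by recording at each stage the maximum of the current use and all previously recorded uses on these components. If $\Phi_e^D$ really converges on a fixed finite set of elements, only finitely many distinct stage computations on it ever occur. So the recorded use still stabilizes, which is all the later limit arguments need. With that convention, your first observation (that the same components are involved) already finishes the proof, and the detour through $\ell$ is unnecessary.
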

\begin{proof}
    Let $\alpha$ and the stages $t<t'<t''$ be as in the hypothesis.
    When $\alpha$ initializes itself at a stage $t'>t$, then the next time $\alpha$ is eligible to act, say at a stage $s_0$, it enters \textbf{Case 2P}. It has no currently defined permission intervals at this point, but has access to its set of parameters, $n^0,n^1,\dots,n^k$, which it defined before it took the $\infty$ outcome at stage $t'$.
    If it can define $u_0$ at stage $t''>t'$, recall that, by \textbf{Subcase 2.1P}, $u_0$ is the maximum of the uses of the $\Phi_e^D[t'']$-computations on the $2n^0_\alpha$th and $(2n^0_\alpha+1)$st components of $\A$. By use convention, because $\Phi_e^D[t'']$ converged again on the aforementioned $\A$-components and $t''>t$, we have that $u_0[t'']\geq u_0[t]$. 
    
    It is worth noting that if the $2n_\alpha^0$th and $(2n_\alpha^0+1)$st components of $\A$ have cycles of lengths $5n_\alpha^0+k$ for $k\geq 3$, if $\Phi_e^D[t'']$ converged on those components again, then $u_0[t'']>u_0[t]$ since the computation now has to account for the longer cycles.
\end{proof}


\begin{lemma}\label{lemma: infinity limsup}
    If $\limsup_s u_0[s]\to\infty$, then $\alpha$ takes the $\infty$ outcome infinitely often. 
\end{lemma}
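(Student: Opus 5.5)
The plan is to prove the contrapositive. Assume $\alpha$ takes the $\infty$ outcome only finitely often, and show that $u_0[s]$ is then bounded, which contradicts $\limsup_s u_0[s]=\infty$. Implicitly I assume $\alpha$ is initialized from above only finitely often. Otherwise each initialization would reset $n^0_\alpha$ to a new large value and $u_0$ would legitimately grow, so the statement should be read for a fixed incarnation of $\alpha$, for instance one on the true path after its last external initialization. I will make that reading explicit first. So fix a stage $s_0$ after which $\alpha$ is neither initialized from above nor takes the $\infty$ outcome. From $s_0$ on, the parameter $n^0_\alpha$ is fixed.

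Next I would track how $u_0$ can change after $s_0$. By \textbf{Subcase 2.1P}, $u_0$ is the maximum $D$-use of the $\Phi_e^D[s]$-computations on the $2n^0_\alpha$th and $(2n^0_\alpha+1)$st components of $\A$. Once it is defined, $\alpha$ only redefines it after the old computation has been destroyed. Because $D$ is c.e., that requires some number $\ell<u_0$ to enter $D$. I would then invoke the global strategy. If such an $\ell$ enters $D$ at a stage when $u_0$ is defined, then either \textbf{Case B} applies to $\alpha$ itself, or some higher-priority $P$-strategy claims $\ell$. In the first case $\alpha$ is sent to \textbf{Case 3P} and takes $\infty$, which is impossible after $s_0$. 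In the second case $\alpha$ is initialized (the current true path moves left of it, or that strategy takes outcome $s$ and terminates progress), again impossible after $s_0$. Hence after $s_0$ no number below the current $u_0$ ever enters $D$. Every computation defining $u_0$ after $s_0$ is therefore permanent, and $u_0$ is set at most once more after $s_0$.

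Finally, $u_0$ takes only finitely many values after $s_0$ (at most one new definition), so $u_0[s]$ is eventually constant. This contradicts $\limsup_s u_0[s]=\infty$, and the lemma follows; Lemma \ref{lemma: u_0 grows large with every initialization} is not even needed for this direction. The main obstacle is the bookkeeping in the second step. I must check carefully that every way a small enumeration can reach $\alpha$'s $n^0$-computation is routed either through \textbf{Case B} for $\alpha$ or through an initialization of $\alpha$. This includes an enumeration that is claimed by \textbf{Case A} for a higher-priority strategy while still lying below $u_{n^0_\alpha}$. If some scenario slips through, I would patch it using the ordering observations from Section~\ref{section: interactions between P strategies}: parameters of strategies below $\alpha$ are chosen larger than $\alpha$'s uses, so their permission intervals lie above $u_0$ and cannot be the source of such an $\ell$.
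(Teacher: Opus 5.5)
Your proposal is correct and follows the paper's argument. You prove the contrapositive, restrict to $\alpha$ after its last initialization from above (the paper simply assumes $\alpha$ lies on the true path) and after its last $\infty$ outcome, and observe that $u_0$ can only be redefined after a $D$-enumeration below $u_0$, which would send $\alpha$ to \textbf{Case 3P} and force the $\infty$ outcome.

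Your case split through \textbf{Case A}/\textbf{Case B} is more detailed than the paper's, but one claim is inaccurate. A higher-priority $\beta$ with $\beta^\frown\langle\infty\rangle\subseteq\alpha$ that takes $\infty$ does not initialize $\alpha$. That case is covered instead by $\alpha$'s own \textbf{Case 3P} at its next visit, or more simply by the fact that $\alpha$ only redefines $u_0$ after taking $\infty$ or being initialized.
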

\begin{proof}
    Suppose that $\alpha$ takes the $\infty$ outcome only finitely often. Without loss of generality, suppose that $\alpha$ is on the true path of the construction (or else $u_0$ is eventually undefined). Let $s_0$ be the least stage such that for all stages $s\geq s_0$, $\alpha$ does not take the $\infty$ outcome at the end of stage $s$. Suppose that by stage $s_1>s_0$ that $\alpha$ has defined the following parameters $n^0,n^1,\dots,n^k$ and permission intervals $(u_{n^i},v_{n^i}]_{0\leq i<k}$. In particular, $u_0[s_1]=u_{n^0}$. After stage $s_0$, when $\alpha$ acts, it either takes some waiting outcome $w_{n^i}$ for $i\geq k$ as in \textbf{Subcases 2.1-3P} or the success outcome if permission is granted as in \textbf{Subcase 2.4P} or if it had previously succeeded as in \textbf{Case 4P}. In each case, we will have that $u_0[s_1]=u_0[s_1+1]$. 
    
    Additionally, since $\alpha$ is on the true path, no higher priority strategy will initialize $\alpha$, and so $u_0[s_1]$ will remain. The only way for $u_0[s_1]$ to be undefined is if there exists a $D$-enumeration smaller than $u_0[s_1]$ as in \textbf{Case 3P} that occurs after stage $s_1$, but if this occurs, then $\alpha$ must take the $\infty$ outcome. Hence, for all stages $t\geq s_1$, $u_0[t]=u_0[s_1]$ and it follows that $\limsup_s u_0[s]=u_0[s_1]$ and this is finite.
\end{proof}

We have by Lemma \ref{lemma: infinity limsup} that $\Phi_e^D$ cannot be total on $\A$ in this case because if $\limsup_s u_0[s]\to\infty$, then $\Phi_e^D$ does not actually converge on the $2n^0$th and $(2n^0+1)$st components of $\A$. So, $\alpha$ satisfies its $P_e$ requirement.

In the case where $\limsup_s u_0[s]\to\infty$, we have the following.

\begin{lemma}\label{fact: some components are infinite now}
    Suppose $\alpha$ takes the $\infty$ outcome infinitely often. If $\alpha$ defines the $D$-uses $u_{n^i}$ for $i\geq 1$ before every stage at which $\alpha$ takes the $\infty$ outcome, then the $2n^0$th and $(2n^0+1)$st components of $\A$ (and of $\B$) will have cycles of length $5n^0+k$ for $k\geq 3$.
\end{lemma}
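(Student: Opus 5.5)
The plan is a direct counting argument on the cycles that the construction attaches to $a_{2n^0}$ and $a_{2n^0+1}$. We track the largest cycle length on these components across the infinitely many stages at which $\alpha$ takes the $\infty$ outcome, and show that it grows without bound through every length $5n^0+k$, $k\ge 3$.

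First observe that after its last initialization by a higher priority strategy (we may assume $\alpha$ is on the true path, or at least that it is initialized from the left only finitely often), $\alpha$ keeps $n^0$ fixed forever. By \textbf{Case 3P} and item (1) of Section \ref{section: interactions between P strategies}, self-initialization retains all parameters. So the $2n^0$th and $(2n^0+1)$st components are never abandoned, and edges are only ever added to them in $\A$. In $\B$ they are rebuilt with the same use in \textbf{Case 3P}. Fix the stages $t_0<t_1<\cdots$ at which $\alpha$ takes $\infty$. By hypothesis, between $t_j$ and $t_{j+1}$ the strategy defines $u_{n^1}$ (after $u_{n^0}$). Whenever $u_{n^{1}}$ is defined in \textbf{Subcase 2.1P} or \textbf{2.3P}, it adds the next pair of differentiating loops to the $n^0$-components. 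On the first pass these are the $(5n^0+3)$- and $(5n^0+4)$-loops. After a self-initialization, the rule at the end of \textbf{Subcase 2.1P} instead adds lengths $5n^0+m+2$, where $5n^0+m$ was the previous longest length. In between, \textbf{Case 3P} performs the homogenization step (3), which adds the currently distinguishing lengths to the opposite root.

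The key step is an induction on $j$. After the $j$th round, that is, after the addition of loops prompted by the definition of $u_{n^1}$ preceding $t_j$ and the homogenization at $t_j$, both $a_{2n^0}$ and $a_{2n^0+1}$ carry every cycle length $5n^0+k$ for $1\le k\le 2j+2$, and the same holds in $\B$. The base case comes from \textbf{Case 1P} together with the first addition of lengths $5n^0+3$ and $5n^0+4$. For the inductive step, in round $j+1$ the lengths $5n^0+2j+3$ and $5n^0+2j+4$ are added, one to each root. The homogenization at $t_{j+1}$ then adds each to the other root, so both roots now carry all lengths up to $5n^0+2j+4$. We must also check that success in \textbf{Subcase 2.4P} does not intervene. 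It cannot, because $\alpha$ takes $\infty$ infinitely often, while \textbf{Case 4P} would freeze it on outcome $s$ until an initialization. Since $\alpha$ would then never return to $\infty$ under our standing assumption, success never occurs. Taking the union over all $j$ gives cycles of every length $5n^0+k$, $k\ge3$, on both roots, in $\A$ and in $\B$.

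The main obstacle is the bookkeeping between $\A$ and $\B$ when \textbf{Case 3P} occurs. The enumeration below $u_{n^0}$ destroys the $\B$-loops whose use exceeds the enumerated number, so we must confirm that the reconstruction in \textbf{Case 3P} really restores every previously added loop, and that $\B$ ends up with exactly the same infinite set of lengths. We will also need a check that the lengths added by the $n^0$-strategy never collide with lengths used by other components. This holds because the components for larger parameters have base lengths $5n'+1 > 5n^0+k$ only for bounded $k$, so this point needs care. The first thing I would try is to argue that the isomorphism type is determined by the set of lengths at each root, which makes such collisions harmless.
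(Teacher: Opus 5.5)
Your proposal is correct and is essentially the paper's argument. Between consecutive $\infty$-stages, $u_{n^1}$ is defined, and via \textbf{Subcase 2.1P}/\textbf{2.3P} this attaches a strictly longer pair of loops to $a_{2n^0},a_{2n^0+1}$ and $b_{2n^0},b_{2n^0+1}$; \textbf{Case 3P} then restores whatever the small enumeration destroyed in $\B$, so infinitely many rounds exhaust all lengths $5n^0+k$, $k\ge 3$. Your induction simply makes this bookkeeping explicit, and your check that \textbf{Subcase 2.4P} never fires fills a point the paper leaves tacit.

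Two minor corrections. First, homogenization step (3) adds the \emph{older} differentiating pair to the opposite roots (e.g.\ $5n^0+1$ and $5n^0+2$ in the first round) and leaves the newest pair distinguishing. Your description of which pair gets homogenized is therefore off by one round, though the invariant you state (both roots carry all $5n^0+k$ with $1\le k\le 2j+2$) still holds. Second, the worry about length collisions with other components is irrelevant to this lemma, since it only asserts which cycles sit on these two roots.
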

\begin{proof}
    Recall that for each $i\in\omega$, in order for the associated $D$-use $v_{n^i}$ to be defined, we must first see that $u_{n^i}$ and $u_{n^{i+1}}$ are defined. Once the two associated $D$-uses $u_{n^0}$ and $u_{n^1}$ are defined, then $\alpha$ acts as in \textbf{Case 2.1P} and adds longer cycles to $a_{2n}$ and $a_{2n+1}$, and their corresponding nodes in $\B$. It then defines the value of $v_{n^0}$. Upon self-initialization when it takes the $\infty$ outcome again, $\alpha$ will recover any cycles that disappeared as a result of the small $D$-enumeration as in \textbf{Case 3P}, and so those newly-added longer cycles will reappear in the $2n^0$th and $(2n^0+1)$st components of $\A$ and $\B$. By the lemma's hypothesis, this happens infinitely often, and at each self-initialization, $\alpha$ will add longer cycles to the $\A$- and $\B$-components. Hence, the $2n^0$th and $(2n^0+1)$st components will contain cycles of length $5n^0+k$ for $k\geq 3$.
\end{proof}

For an $\alpha$ which takes the $\infty$ outcome infinitely often but may not be able to define $D$-uses $u_{n^i}$ for $i\geq 1$ ever again before it initializes itself again after some point in the construction, the $2n^0$th and $(2n^0+1)$st components will remain finite. Even though the component stays finite in this case, Lemma \ref{lemma: infinity limsup} still holds. Lemma \ref{fact: some components are infinite now} is an interesting new part of the construction, as now some components in $\A$ possess infinitely many cycles of increasing length. Finding a syntactic description of the automorphism orbits of these components is now more difficult than in previous constructions where all graph components remained finite.

We now state and prove the main verification lemma.

\begin{lemma}[Main Verification Lemma]\label{verification: main verification lemma 1}
    Let $\pi=\liminf_s\pi_s$ be the true path of the construction, where $\pi_s$ denotes the current true path at stage $s$ of the construction. Let $\alpha\subset\pi$.
    \begin{itemize}
        \item[(1)] If $\alpha$ is an $S_i$-strategy, then either $\alpha$ takes outcome $\infty$ infinitely often or there is an outcome $w_n$ and a stage $t$ such that for all $\alpha$-stages $s>t$, $\alpha$ takes outcome $w_n$. If $\A\cong\mathcal{M}_i$, then $\alpha$ takes the $\infty$ outcome infinitely often and $\alpha$ defines a partial embedding $f_\alpha:\A\to\mathcal{M}_i$ which can be extended to a computable isomorphism $\hat{f}_\alpha:\A\to\mathcal{M}_i$.
        \item[(2)] If $\alpha$ is a $P_e$-strategy, then either $\alpha$ takes the outcome $\infty$ infinitely often or is there is an outcome $o\in\{w_n\}_{n\in\omega}\cup\{s\}$ and a stage $t$ such that for all $\alpha$-stages $s>t$, $\alpha$ takes outcome $o$. If $\limsup_s u_0[s]\to\infty$, then $\alpha$ takes the $\infty$ outcome infintely often and so $\Phi_e^D$ cannot be an isomorphism from $\A$ to $\B$.
    \end{itemize}
    In addition, $\alpha$ satisfies its assigned requirement.
\end{lemma}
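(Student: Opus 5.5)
We argue by induction on the length of $\alpha\subset\pi$. Assume that there is a stage $s_0$ after which no strategy $\beta\subsetneq\alpha$ initializes $\alpha$, and that $\pi$ passes through $\alpha$ infinitely often. This holds for $\alpha=\lambda$, and we show it passes to $\alpha^\frown\<o\>$ for the true outcome $o$. The inductive step amounts to proving the dichotomy in (1) and (2): either $\infty$ is taken infinitely often, or $\alpha$ eventually settles on a single outcome. Since $\infty$ is the leftmost outcome, in the first case $\alpha^\frown\<\infty\>\subset\pi$. In the second case the settled outcome is the $\liminf$. Either way, strategies below the true outcome are initialized only finitely often by $\alpha$, which is what the induction needs.

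\textbf{$S_i$ case.} Suppose $\alpha$ takes $\infty$ only finitely often. Then $n_\alpha$ is incremented only finitely often, so it stabilizes at some value $n$. Once $n_\alpha$ is stable, $\alpha$ takes $w_n$ at every $\alpha$-stage, which gives the dichotomy. If in this situation $\A\cong\mathcal M_i$, then the components $2k,2k+1$ for $k\le n$ have images in $\mathcal M_i$, and these images eventually appear with all loops of length $\le l_\alpha$. The only thing that could prevent $\alpha$ from finding them is that $l_\alpha$ keeps being raised by challenges. Challenges come only from $P$-strategies extending $\alpha^\frown\<\infty\>$, and by the lemma that an $S_i$-strategy can be challenged by at most one $P_e$-strategy at any given stage, there is at most one pending challenge at a time. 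Each challenge adds only finitely many loops to components already built. Hence $\alpha$ eventually finds copies and increments $n_\alpha$, a contradiction. So if $\A\cong\mathcal M_i$, then $\alpha$ takes $\infty$ infinitely often, and $f_\alpha=\bigcup_s f_\alpha[s]$ is computable: the stages where $\alpha$ acts are computable after $s_0$. The main point to check is that $f_\alpha$ is never revised in an incorrect way. Loops added later by lower priority $P$-strategies on a component already mapped are exactly what challenges force $\alpha$ to re-match before it proceeds. Loops added by $P$-strategies to the right of $\alpha^\frown\<\infty\>$ use large parameters and so lie on unmapped components. Thus $f_\alpha$ is a total embedding of root nodes and their loops, and by Lemma~\ref{lemma: surjective embedding is an isomorphism} it is an isomorphism $\hat f_\alpha$.

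\textbf{$P_e$ case.} Suppose $\alpha$ takes $\infty$ only finitely often. Lemma~\ref{lemma: only finitely many permission intervals} shows that only finitely many valid permission intervals, hence only finitely many parameters $n^k$, are ever defined after $s_0$. So either $\alpha$ enters Subcase 2.4P and then takes $s$ forever by Case 4P, or $k$ stabilizes and $\alpha$ takes $w_k$ forever. The dichotomy therefore holds. By Lemma~\ref{lemma: infinity limsup}, if $\limsup_s u_0[s]=\infty$, then $\infty$ is taken infinitely often and $\Phi_e^D$ diverges on components $2n^0,2n^0+1$.

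We then check satisfaction of $P_e$ outcome by outcome.
\begin{itemize}
\item Outcome $w_k$ true: $\Phi_e^D$ never correctly maps components $2n^k,2n^k+1$. Either it is partial there, or it maps them to components of the wrong isomorphism type.
\item Outcome $s$ true: the configuration of Figure~3 shows that $\Phi_e^D$ with its preserved computation below $u_{n^j}$ sends $a_{2n^j}$ to $b_{2n^j}$, while these two components are no longer isomorphic.
\item Outcome $\infty$ true: $u_0$ is redefined infinitely often, and by Lemma~\ref{lemma: u_0 grows large with every initialization} it is nondecreasing, with strict increase when long cycles are added as in Lemma~\ref{fact: some components are infinite now}. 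If $u_0$ stayed bounded, $D$ would settle below that bound, and only finitely many Case B enumerations below $u_0$ could occur. So $u_0\to\infty$ and $\Phi_e^D$ is undefined on those components.
\end{itemize}

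\textbf{Main obstacle.} The hardest step is the $S$-case claim that challenges cannot keep $\alpha$ waiting forever while $\A\cong\mathcal M_i$. In particular, we must show that the infinitely-petalled components produced by infinitely-often-$\infty$ $P$-strategies below $\alpha$ do not cause $\alpha$ to be challenged on the same component cofinally. We would handle this with the observation of Section~\ref{section: interactions between P strategies}: those rebuilds occur at larger uses and happen only after $\alpha$ has met each previous challenge. Between any two challenges $\alpha$ therefore still increments $n_\alpha$, so $\infty$ is taken infinitely often, and $f_\alpha$ on a component with infinitely many petals is extended loop by loop as those loops appear.
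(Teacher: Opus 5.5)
Your proposal follows the paper's proof step for step, with one variation in the $\infty$ case. For $S_i$ you use the same contrapositive (if $\alpha$ is stuck on $w_n$, the required copies never appear in $\mathcal{M}_i$); for $P_e$ you get the dichotomy from Lemmas~\ref{lemma: only finitely many permission intervals} and~\ref{lemma: infinity limsup} and then check each outcome, except that for $\infty$ you show $u_0$ is unbounded (via Lemma~\ref{lemma: u_0 grows large with every initialization}) where the paper argues by direct contradiction from a stage after which $D$ is correct below the true use of $\Phi_e^D$ on components $2n^0,2n^0+1$. Both versions rest on the same use-principle step, which tacitly needs those components to be finite: in the Lemma~\ref{fact: some components are infinite now} scenario you invoke, $u_0$ measures the use on an ever-growing finite piece, so $u_0\to\infty$ is compatible with $\Phi_e^D$ being total there, and neither your argument nor the paper's handles this case.
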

\begin{proof}
    We first prove $(1)$. Let $\alpha\subseteq\pi$ be an $S_i$-strategy and let $s_0$ be the least stage such that for all $s\geq s_0$, $\alpha\leq_L\pi_s$. Suppose that $\alpha$ only takes the $\infty$ outcome finitely often. Fix an $\alpha$-stage $s_1>s_0$ such that $\alpha$ does not take the $\infty$ outcome at any $\alpha$-stage $s\geq s_1$. Suppose that $\alpha$ takes the $w_n$ outcome at stage $s_1$. There are two cases to consider.

    If $\alpha$ is not challenged at stage $s_1$, then $\alpha$ acts as in \textbf{Case 3S} of its strategy. $\alpha$ cannot be challenged by any $P$-strategies extending $\alpha^\frown\<w_n\>$ and so it will remain in \textbf{Case 3S} taking outcome $w_n$ for all future $\alpha$-stages until it finds copies of the $\A$-components corresponding to $w_n$. However, if this happens, $\alpha$ would take the $\infty$ outcome after stage $s_1$, and so $\alpha$ must never find these copies. Thus, $\alpha$ takes the $w_n$ outcome at every $\alpha$-stage $s\geq s_1$ and $\mathcal{M}_i$ does not contain the copies of the $\A$-components and thus cannot be isomorphic to $\A$.

    If $\alpha$ is challenged at stage $s_1$, then $\alpha$ acts as in \textbf{Case 2S} of its strategy.
    By the same argument as the one above, $\alpha$ can never meet this challenge by finding images for the new loops in the $\A$-components corresponding to $w_n$. It follows that $\mathcal{M}_i\not\cong\A$ and that $\alpha$ takes the $w_n$ outcome for all $\alpha$-stages $s\geq s_1$.

    These arguments show that if $\A\cong\mathcal{M}_i$, then $\alpha$ takes the $\infty$ outcome infinitely often.
    Furthermore, if $\A\not\cong\mathcal{M}_i$ then $\alpha$ stays on one $w_n$ outcome after some stage.

    We prove one more fact about the behavior of $S_i$-strategies in the case where $\A\cong\mathcal{M}_i$.
    Let $n_\alpha[s]$ denote the value of $n_\alpha$ at the end of stage $s$.
    We now show that $n_\alpha[s]\to\infty$ as $s\to\infty$ in the case where $\alpha$ takes the $\infty$ outcome infinitely often because $\A\cong\mathcal{M}_i$. Suppose $\alpha$ takes the $\infty$ outcome at a stage $s>s_0$. We then have that $n_\alpha[s]=n_\alpha[s-1]+1$ by the action in either \textbf{Case 2S} or \textbf{Case 3S}. Because $\A\cong\mathcal{M}_i$, if $\alpha$ is issued a challenge by some $P$-strategy $\beta\supseteq\alpha^\frown\<\infty\>$, there will always be a stage after the challenge has been issued where $\alpha$ can extend its map appropriately and takes the $\infty$ outcome. Every time this occurs, $n_\alpha$ is incremented by $1$, and so $n_\alpha[s]\to\infty$ as $s\to\infty$.

    We now prove $(2)$, so proceed by assuming that $\alpha$ is a $P_e$-strategy.
    Since $\alpha\subset\pi$, fix $s_0$ to be the least stage such that for all stages $s\geq s_0$, $\alpha<_L\pi_s$. Suppose that $\alpha$ takes the $\infty$ outcome only finitely often. Let $s_1\geq s_0$ be the least stage such that for all stages $s\geq s_1$, $\alpha$ does not take the $\infty$ outcome at the end of stage $s$.

    By Lemma \ref{lemma: infinity limsup}, we have that if $\limsup_s u_0[s]\to\infty$, then $\alpha$ takes the $\infty$ outcome infinitely often. We now show that if $\alpha$ takes the $\infty$ outcome infinitely often, then $\Phi_e^D$ cannot be an isomorphism and so $\alpha$ meets its requirement in this way. 

    Assume towards contradiction that $\Phi_e^D$ is an isomorphism. Let $s'$ be a large enough stage after stage $s_0$ such that $\alpha$ has defined the following parameters
    \[
    n^0_\alpha<n^1_\alpha<\dots<n^k_\alpha,
    \]
    permission intervals $(u_{n^i_\alpha},v_{n^i_\alpha}]_{0\leq i<k}$, and that if $\Phi_e^D[s']\downarrow$ correctly on the $2n^i_\alpha$th and $(2n^i_\alpha+1)$st components of $\A$, then $\Phi_e^D[s']=\Phi_e^D$ on these two components with true $D$-use $u_{n^i_\alpha}[s']=u_{n^i_\alpha}$. But if $\alpha$ takes the $\infty$ outcome infinitely often after stage $s_0$, there is a stage $t>s'$ such that $\alpha$ takes the $\infty$ outcome at the end of stage $t$. But this would contradict the fact that $s'$ was chosen large enough so that
    \[
    D[s']\restriction u_{n^i_\alpha}=D\restriction u_{n^i_\alpha}
    \]
    since a number smaller than $u_{n^i_\alpha}$ entered $D$ at stage $t$ as described in \textbf{Case 3P}.

    In the case where $\alpha$ does not take the $\infty$ outcome infinitely often, we break into the following subcases.
    Again, let $s_1>s_0$ be the least stage such that $\alpha$ does not take the $\infty$ outcome after stage $s_1$.
    If $\alpha$ remains in the first part of \textbf{Subcase 2.1P} or \textbf{Subcase 2.3P}, or in the waiting part of \textbf{Subcase 2.2P} after setting up the permission interval, then $\alpha$ will continue to take the corresponding waiting outcome in all those subcases.
    This is because the path will move left when moving away from any of the above parts, but as $s_1>s_0$, we know that this will not happen.
    
    Thus, $\alpha$ will take some waiting outcome, e.g. $w_k$ if it remains waiting for $\Phi_e^D$ to converge correctly on the $2n^k$th and $(2n^k+1)$st components of $\A$ like in the first part of \textbf{Subcase 2.1P}, at all $\alpha$-stages $s\geq s_1$. In this case, the $P_e$ requirement is satisfied trivially since $\Phi_e^D$ cannot be an isomorphism between $\A$ and $\B$.
    The other above-mentioned cases are similar.

    Otherwise, suppose there is some $\alpha$-stage $s>s_1$ where, after waiting, $\alpha$ is able to set up a permission interval for $n^k$ as in \textbf{Subcase 2.2P} of the $P_e$-strategy. It then takes outcome $w_{k+1}$ after defining a new large $\alpha$-parameter $n^{k+1}$.
    We have by Lemma \ref{lemma: only finitely many permission intervals} that it is not the case that $\alpha$ will choose outcomes $w_l$ where $l\to\infty$ as $s\to\infty$ for $s\geq s_1$. So, either $\alpha$ remains in some waiting outcome for all stages $s\geq s_1$ like in the first case above, or there exists some $j\leq k$ such that
    \[
    D[s']\restriction(u_{n^j},v_{n^j}]\neq D[s_1]\restriction(u_{n^j},v_{n^j}]
    \]
    for some $\alpha$-stage $s'>s_1$. 
    In this case, we can switch the diagonalizing loops on the $2n^j$th and $(2n^j+1)$st components of $\B$ and carry out the rest of the actions described in \textbf{Subcase 2.4P}.
    $\alpha$ then takes the success outcome $s$ at stage $s'$. 
    
    Additionally, since $\alpha\subseteq\pi$ and $\alpha$ does not take the $\infty$ outcome after stage $s_1$, $\alpha$ continues to take the $s$ outcome for all remaining $\alpha$-stages after stage $s'$ and that $D[s']\restriction s'=D\restriction s'$ (and so the diagonalizing loops stay in their new positions for the rest of the construction). Thus we have successfully diagonalized against $\Phi_e^D$ with the $2n^j$th and $(2n^j+1)$st components of $\A$, satisfying the $P_e$ requirement.\end{proof}

\section{Theorem 1.2}

We now show that the other combination, where the structure we build is not computably categorical but is categorical relative to some given noncomputable c.e\ set, is also possible.

\begin{theorem}
    Given a noncomputable c.e. set $D$, there exists a computable directed graph $\mathcal{A}$ that is computably categorical relative to $D$ but is not computably categorical.
\end{theorem}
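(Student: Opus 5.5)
We build a computable bouquet graph $\A$, with components of the same kind as in Section 2, by a priority construction on a tree of strategies. There are two families of requirements:
\[
R_e:\ \text{the computable structure }\mathcal{C}\text{ we build is not }\Phi_e\text{-isomorphic to }\A\text{ (via total }\Phi_e),
\]
\[
Q_i:\ \text{if }\mathcal{M}_i^D\cong\A,\text{ then there is a }D\text{-computable isomorphism }\A\to\mathcal{M}_i^D.
\]
Here $\mathcal{C}$ is a second computable copy of $\A$ that we construct, and $(\mathcal{M}_i^D)_i$ lists the $D$-computable directed graphs.

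The $R_e$ strategy follows the familiar Goncharov-style diagonalization. It picks fresh components $a_{2n},a_{2n+1}$ (with cycles of lengths $2,5n+1$ and $2,5n+2$) and copies $c_{2n},c_{2n+1}$ in $\mathcal{C}$. It then waits for $\Phi_e$ to map these components to their matching copies. Once that happens, it adds the differentiating $(5n+3)$- and $(5n+4)$-loops to $\A$ crosswise relative to $\mathcal{C}$, followed by homogenizing loops so the two structures stay isomorphic. Since $\A$ and $\mathcal{C}$ are both computable, no permission is needed here. The $R_e$ side is therefore the easy one: it is finitary and takes finite outcomes, $w_k$ or success.

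The difficulty lies in the $Q_i$ strategies, which must build a $D$-computable isomorphism $g_i$ even though the $R$ strategies keep adding loops after $Q_i$ has matched components. The approach is to make the $R$ strategies themselves wait for $D$-permission. When an $R_e$ strategy below $Q_i{}^\frown\langle\infty\rangle$ wants to diagonalize on components $2n,2n+1$, it first records the stage $s$ and a threshold $x$ larger than all computations $Q_i$ has used for components $\le n$. It then acts only if $D\restriction x$ changes after stage $s$, that is, if $D$ permits below $x$. If it does act, the change in $D$ lets the $D$-computable procedure for $g_i$ know that it must wait for the new loops before committing on components $2n,2n+1$. Concretely, $g_i$, with oracle $D$, maps component $2n$ only after reaching a stage $t$ such that $D\restriction x_n[t]=D\restriction x_n$. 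Any later diagonalization at $n$ would require a $D$-change below $x_n$, so after stage $t$ the component is frozen up to the homogenization the strategy would do anyway. If $D$ never permits, we still want $R_e$ satisfied. So, as in Lemma \ref{lemma: only finitely many permission intervals}, $R_e$ repeatedly opens new attempts with fresh $n$ and larger thresholds. If every one of infinitely many attempts went unpermitted, the sequence of thresholds and stages would compute $D$, contradicting noncomputability. Hence some attempt is eventually permitted, and $R_e$ succeeds with a finite outcome.

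The main obstacle, and the reason this theorem is genuinely new, is the interaction between $R_e$ strategies with outcome $\infty$ (attempts continually reset by higher-priority injury) and the $Q_i$ strategies on a $D$ that may equal $0'$. Our plan is to keep every component finite in this construction: each $R_e$ acts at most once per component, and homogenization adds only finitely many loops. Because of this, the $D$-computable matching reduces to finding all loops of bounded length, a task made decidable by the permission threshold. The verification proceeds along the true path as in Lemma \ref{verification: main verification lemma 1}. We would show that each $Q_i$ with $\mathcal{M}_i^D\cong\A$ produces a $D$-computable embedding, which is onto by Lemma \ref{lemma: surjective embedding is an isomorphism}, and that each $R_e$ on the true path eventually diagonalizes or waits forever. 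The most delicate step, and the one to check first, is whether a single permission below $x_n$ can serve all higher-priority $Q_i$ simultaneously; the natural choice is to take $x_n$ as the maximum of the relevant thresholds.
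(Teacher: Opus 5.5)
Your outline has the same architecture as the paper's Section~3 proof: Goncharov-style diagonalization of $\A$ against a computable copy, $R_e$ waiting for $D$-permission on components already matched by higher-priority $Q_i$, fresh attempts when permission is withheld, and noncomputability of $D$ forcing eventual permission. However, the permitting step does not protect $g_i$ the way you set it up.

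You let $R_e$ choose the threshold $x_n$ only when it wants to diagonalize, i.e.\ after $\Phi_e$ has converged on $A_n$. You then let $g_i^D$ wait for a stage $t$ with $D_t\restriction x_n=D\restriction x_n$. But $\Phi_e$ may never converge on $A_n$, so $x_n$ may never exist, and $D$ cannot tell whether it will. Waiting for $x_n$ therefore makes $g_i^D$ partial. So $g_i^D$ must commit on $A_n$ at some stage $t_0$, consulting only $D\restriction y$ for some $y$ fixed by $t_0$.

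Now suppose $\Phi_e$ converges at some $s_1>t_0$, $R_e$ picks $x_n$ large (so $x_n>y$), and $D$ enumerates some $z$ with $y\le z<x_n$; nothing prevents this. That is a legitimate permission in your scheme, so $R_e$ puts the $(5n+3)$-loop on $a_{2n}$ and the $(5n+4)$-loop on $a_{2n+1}$, crosswise to $\mathcal{C}$. Meanwhile nothing $g_i^D$ consulted has changed. Take $\mathcal{M}_i^D=\mathcal{C}$. The committed value $a_{2n}\mapsto c_{2n}$ was forced before the diagonalization, since $c_{2n}$ was the only root with a $(5n+1)$-cycle. After the diagonalization it is permanently wrong.

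Your freezing argument only applies once $x_n$ exists, which is too late. The same confusion shows in your final sentence. If each $Q_i$ carries its own threshold, one $D$-change serves them all only if it lies below every one of them, i.e.\ below the minimum, not the maximum.

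The missing idea is that the threshold must be the use of $Q_i$'s own commitment, fixed at the moment it commits. In the paper, $S_i$ builds $f_i$ as a $D$-functional and, when extending it to $A_n$, attaches a fresh large $D$-use $u^i_n$. After $\Phi_e$ converges, $R_e$ homogenizes $A_n$. It adds the crosswise $(5n+3)$/$(5n+4)$-loops only when a number below $\min_i u^i_n$ enters $D$ (Cases~3.2 and~4.1), and then resets $m_i$ to $n$. Such a change automatically invalidates $f_i^D$ on $A_n$ and on all later components, so $S_i$ rebuilds the map after the new loops appear. If no such change ever comes, the axiom is permanent and only needs to be extended to the homogenizing loops.

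Freshness of $u^i_n$ is also what makes your ``infinitely many unpermitted attempts would compute $D$'' argument (Lemma~\ref{lemma: permission will eventually be granted}) work. The genuine $\mathcal{M}_i^D$-uses can stay bounded, for instance when $\mathcal{M}_i$ is computable.
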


\subsection{Requirements and Tree of Strategies} We list our requirements that we seek to satisfy.
\[\text{$R_e$: $\Phi_e:\mathcal{A}\to\mathcal{B}$ is not an isomorphism; and}\] 
\[\text{$S_i$: If $M_i^D\cong \mathcal{A}$, then $M_i^{D}$ is $D$-isomorphic to $\mathcal{A}$.}\]

Here, $\mathcal{B}$ is a fixed computable copy of $\mathcal{A}$ that we build alongside the latter.

Strategies will act to fulfill these requirements along a tree.
The tree will be built like the one from the previous section.
The outcomes will have similar intuitive meanings as before (though diagonalizing strategies will no longer need to use the $\infty$ outcome as the $P$-strategies from before did).
The basic mechanics and injury mechanism along the tree are standard and the same as before.

\subsection{Formal strategies}

   \subsubsection{$R_e$-strategies}

    Let $D$ be our noncomputable c.e.\ set. Again we assume that at most one element is enumerated into $D$ at each stage $s$. Let $d_s$ denote the last number enumerated into $D$ at the beginning of stage $s$, i.e. we define $d_s=x$ where $x\in D[s]$ but for all $t<s$, $x\not\in D[t]$, if such an $x$ exists. If no such $x$ exists, then let $d_s=d_{s-1}$.

    Every requirement is assigned to a strategy that acts to fulfill the requirement.
    The first time a strategy acts is called the \textit{initialization}.
    Each subsequent time, we are \textit{revisiting} the strategy.
    Each time a strategy acts, it will go through a series of steps.
    Some of these steps will only happen on the initialization or upon revisiting the strategy.
    The exact action will often depend on what the strategy did previously and how other strategies have acted before the given stage.
    We now describe the action of the strategies aiming to fulfill the $R_e$ requirements.

    \textbf{Step 1:} Upon initialization at a stage $s$, a parameter $n^e_0$ is chosen to be larger than any parameter picked in the construction so far. For this description, we let $n=n^e_0$. The following components are then created in $\mathcal{A}[s]$ and in $\mathcal{B}[s]$ (See Figure 4):
            \begin{align*}
        a_{2n} : 2, \textcolor{red}{5n+1} & \ \ \ \ a_{2n+1} : 2, \textcolor{blue}{5n+2} \\
        \textcolor{darkgray}{b_{2n}}: 2, \textcolor{red}{5n+1} & \ \ \ \ \textcolor{darkgray}{b_{2n+1}} : 2, \textcolor{blue}{5n+2} .
    \end{align*}

    \begin{figure}[h]
   \centering
   \makebox[\textwidth][c]{\includegraphics[width=.7\textwidth]{Images/S1.png}}
    \caption{}
    \end{figure}
    
    \textbf{Step 2:} 
    To describe this step, we will refer to parameters of the form $m_i[s']$.
    This is a parameter associated with the strategy acting to fulfill an $S_i$ requirement.
    We therefore do not precisely describe how these are calculated until we discuss the $S_i$ strategies in more detail.
    For now, it is enough to know that $m_i[s']$ gives the number of components in the domain of the partial isomorphism $f_i[s']$ between $M_i^D$ and $\mathcal{A}$ constructed by $S_i$ at stage $s'$.
    
    Upon being revisited at a stage $s'>s$, $R_e$ asks the following questions:
    \begin{itemize}
        \item[(i)] Which indices $i$ have the property that $m_i[s']>n$? Note that if $m_i[s']>n$, the map $f_i[s']$ exists on the two $\mathcal{A}[s']$-components above.
        \item[(ii)] Does $\Phi_e$ map the two $\mathcal{A}[s']$-components above correctly into $\mathcal{B}[s']$?
    \end{itemize}
    If there exist such indices $i$ in (i), then let $u^i_n[s']$ denote the use of the map $f_i[s']$ on the aforementioned $\mathcal{A}[s']$-components. If the answer is yes to (ii), we proceed to \textbf{Step 3}. Otherwise, take the $w_n$ outcome and at the next time we revisit the strategy, we will return to \textbf{Step 2} and ask the same question (ii) until the answer is yes (if ever) before proceeding. While we wait to move on to \textbf{Step 3} for this parameter $n$, we call it the \textit{current active parameter} of $R_e$ and continue taking the $w_n$ outcome. Its corresponding pair of $\A$-components, which we denote by $A_n$, are the \textit{current active components} of $R_e$.
    
    \textbf{Step 3:} We break into the following cases. We will suppress the stage notation throughout this description unless otherwise needed.
    
    \textbf{Case 3.1}: Suppose there is no $i$ such that $m_i[s']>n$. Then we add new cycles in $\mathcal{A}$ and in $\mathcal{B}$ to diagonalize against $\Phi_e$ with the following configuration (See Figure 5):
    \begin{align*}
        a_{2n} : 2, 5n+1, 5n+2, \textcolor{red}{5n+3} & \ \ \ \ a_{2n+1} : 2, 5n+1, 5n+2, \textcolor{blue}{5n+4} \\
        \textcolor{darkgray}{b_{2n}} : 2, 5n+1, 5n+2, \textcolor{blue}{5n+4} & \ \ \ \ \textcolor{darkgray}{b_{2n+1}} : 2, 5n+1, 5n+2, \textcolor{red}{5n+3}.
    \end{align*}

   \begin{figure}[h]
   \centering
   \makebox[\textwidth][c]{\includegraphics[width=.7\textwidth]{Images/S2.png}}
    \caption{}
    \end{figure}
    
    $R_e$ needs to take no further action and can end its strategy in this case by taking the $s$ outcome.
    
    \textbf{Case 3.2}: Suppose there exist indices $i$ such that $m_i[s']>n$ and so the $D$-parameters $u^i_n$ are defined. Then, homogenize the components in $A_n$ in the following way (See Figure 6):
    \begin{align*}
        a_{2n} : 2, 5n+1, 5n+2 & \ \ \ \ a_{2n+1} : 2, 5n+1, 5n+2 \\
        \textcolor{darkgray}{b_{2n}} : 2, 5n+1, 5n+2 & \ \ \ \ \textcolor{darkgray}{b_{2n+1}} : 2, 5n+1, 5n+2,
    \end{align*}
    \begin{figure}[h]
   \centering
   \makebox[\textwidth][c]{\includegraphics[width=.7\textwidth]{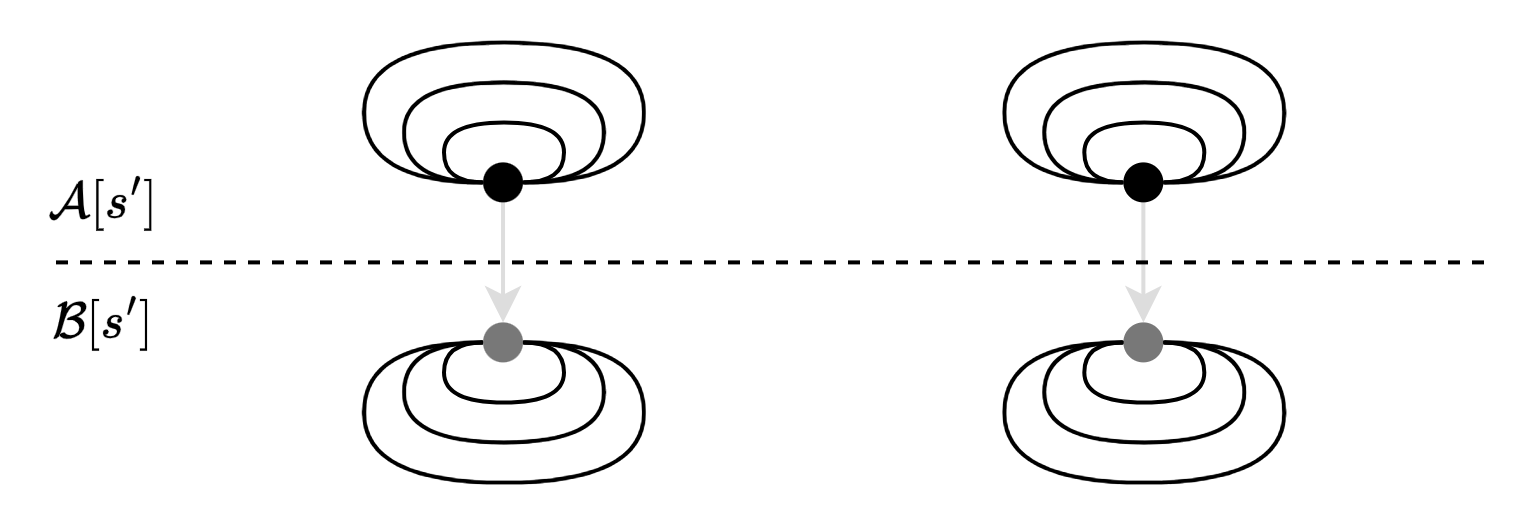}}
    \caption{}
    \end{figure}
    
    \noindent and check if $d_{s'}<\min\{u^i_n : \text{$i$ is such that $m_i[s']>n$}\}$ (we refer to this property of $d_{s'}$ as $d_{s'}$ \textit{being small enough}). If $d_{s'}$ is small enough, then we add additional cycles to the components in $A_n$ to diagonalize against $\Phi_e$ as follows:
        \begin{align*}
        a_{2n} : 2, 5n+1, 5n+2, \textcolor{red}{5n+3} & \ \ \ \ a_{2n+1} : 2, 5n+1, 5n+2, \textcolor{blue}{5n+4} \\
        \textcolor{darkgray}{b_{2n}} : 2, 5n+1, 5n+2, \textcolor{blue}{5n+4} & \ \ \ \ \textcolor{darkgray}{b_{2n+1}} : 2, 5n+1, 5n+2, \textcolor{red}{5n+3}.
    \end{align*}

    After adding these new cycles, for all $S_i$-strategies $\beta$ where $i$ is such that $m_i[s']>n$ and $\alpha\supseteq\beta^\frown\<\infty\>$, $R_e$ redefine $m_i[s'+1]=n$. $R_e$ does not need to take any further action and thus can take the $s$ outcome. Otherwise, if $d_{s'}>u^i_n$ for some $i$, $R_e$ defines a new large $n^e_1$, and begins taking the previous steps with this parameter in mind, and lets the next strategy act after taking the $w_{n^e_1}$ outcome.
    To be explicit, the strategy will create new components for this $n^e_1$ as in \textbf{Step 1} and subsequently try to add these components to the list of current active components as described in \textbf{Step 2} when the strategy is revisited. 
    

    \textbf{Step 4:} Suppose $R_e$ is revisited at stage $s''>s'$ and its currently active components are in $A_k$ for some $k\geq 0$. In order of creation, it revisits all of its previously created components $A_0,\dots,A_{k-1}$ to see if $D$-permission has been granted (i.e., there is an enumeration into $D$ that is small enough) or not for each one. We have the following cases.

    \textbf{Case 4.1:} Suppose there is a least $j<k$ such that $d_{s''}<\min\limits_i\{u^i_j\}$, then we say that $R_e$ has \textit{received permission from} $D$ to finish its diagonalization using the components in $A_j$. $R_e$ now adds the new cycles of lengths $5j+3$ and $5j+4$ in the following way:
        \begin{align*}
        a_{2j} : 2, 5j+1, 5j+2, \textcolor{red}{5j+3} & \ \ \ \ a_{2j+1} : 2, 5j+1, 5j+2, \textcolor{blue}{5j+4} \\
        \textcolor{darkgray}{b_{2j}} : 2, 5j+1, 5j+2, \textcolor{blue}{5j+4} & \ \ \ \ \textcolor{darkgray}{b_{2j+1}} : 2, 5j+1, 5j+2, \textcolor{red}{5j+3}.
    \end{align*}
    $R_e$ then redefines $m_i[s']=j$ for all applicable $S_i$-strategies, halts its work for all of its other parameters defined so far, and takes the $s$ outcome. We let the next strategy act.

    \textbf{Case 4.2:} If for all $j<k$, $R_e$ does not receive permission from $D$ to finish its diagonalization using any of the $A_j$'s, then we continue the $R_e$-strategy for the components in $A_k$ by checking if it can proceed to \textbf{Step 3}. Continue taking the $w_k$ outcome.

    Note that in \textbf{Case 4.1}, we let $R_e$ finish its diagonalization using $A_j$ for the least $j$ which received $D$-permission. For all $S_i$-strategies such that the uses $u^i_n$ are defined for $n\geq j$, the corresponding maps also get deleted because $u^i_n\geq\min\limits_i{u^i_j}$ by definition. Later during verification, in the case that $\mathcal{A}\cong\mathcal{M}^D_i$, whenever $S_i$ recovers its maps on the components in $A_j$, we will show that it will also eventually get the opportunity to recover its maps on the later components found in $A_n$ for $n\geq j$.

   \subsubsection{$S_i$-strategies.} We let $m_i[s]$ denote the value of the $S_i$-parameter at the beginning of stage $s$ in the construction, where $m_i[s]=n$ denotes that the $S_i$-strategy is currently looking for copies of the components with root nodes $a_{2n}$ and $a_{2n+1}$ (equivalently, the components found in $A_n$). Throughout this description, when we are looking for copies of $\A$-components in $\mathcal{M}_i^D$, we mean that we will always look for the oldest (the copy which appeared first and remains throughout the stages) and least indexed (if there are multiple copies at some stage, pick the least indexed one) copies which show up throughout the enumeration of $\mathcal{M}_i^D$. 

\textbf{Step 1:} Upon initialization at stage $s$, we define our parameter $m_i=0$ and $f_i:\mathcal{A}\to\mathcal{M}^D_i$ to be the empty map. Take the $w_0$ outcome.
    
\textbf{Step 2:} Upon being revisited at a stage $s'>s$, we first check if $m_i[s']$ has been redefined by some $R_e$-strategy at some previous stage before stage $s'$ (see either \textbf{Case 3.2} or \textbf{Case 4.1} of the $R_e$-strategy for more details). For $m_i[s']$, we search for copies of the corresponding $\mathcal{A}$-components in $\mathcal{M}_i^D[s']$. If copies are found, we extend our definition of $f_i[s'-1]$ accordingly to obtain $f_i[s']$ and define a new large $D$-use $u^i_{m_i[s']}$ for it. Then, we increment $m_i[s']$ by 1, take the $\infty$ outcome, and let the next strategy act. Otherwise, we let the next strategy act after taking the $w_{m_i[s']}$ outcome and continue searching once $S_i$ is revisited at a later stage $s''>s'$.

Note that sometimes our current guess about the components of $\M_i$ may be incorrect, so we may increase $m_i$ more than we intuitively had any right to based on faulty information.
As it turns out, this is not an issue.
The key input to the verification will be that $m_i$ acts as we expect when $\M_i$ and $\A$ are actually isomorphic.
In other cases, it will not matter what $m_i$ does.

\subsection{Verification}

We now verify that our construction in the previous section works. We begin with some helpful lemmas regarding our construction.

\begin{lemma}\label{lemma: permission will eventually be granted}
    For all $e\in\omega$, $R_e$ only defines finitely many $\mathcal{A}$-components throughout the construction.
\end{lemma}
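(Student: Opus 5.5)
We argue by contradiction, in the style of Lemma \ref{lemma: only finitely many permission intervals}, using that $D$ is noncomputable. Suppose some $R_e$ defines infinitely many $\A$-components, i.e.\ infinitely many parameters $n^e_0<n^e_1<\cdots$ with component pairs $A_{n^e_0},A_{n^e_1},\dots$. First we reduce to the case where $R_e$ is initialized only finitely often. Between two initializations $R_e$ defines only finitely many parameters (at most one per stage). So if $R_e$ were initialized infinitely often, each ``life'' of $R_e$ would be finite. We would then read the statement as being about a single life, or about $R_e$ on the true path. Hence we fix a stage $s_0$ after which $R_e$ is never initialized, and all but finitely many parameters are defined after $s_0$.

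After $s_0$, a new parameter $n^e_{k+1}$ is defined only through \textbf{Case 3.2} with $d_{s'}$ not small enough. In that case the uses $u^i_{n^e_k}$ are defined for the relevant $i$, and we set $\mu_k=\min\{u^i_{n^e_k}: m_i[s']>n^e_k\}$, recorded at the stage $t_k$ when $n^e_{k+1}$ is chosen. Also, $R_e$ never takes the $s$ outcome, since otherwise it halts and defines no further parameters. So \textbf{Case 4.1} never fires at any later $R_e$-stage: no $d_{s''}$ with $s''>t_k$ is below $\mu_k$.

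We want every later enumeration into $D$ to be seen by $R_e$. The problem is that $d_s$ is read at each stage, but $R_e$ only inspects $d_{s''}$ at its own stages, and an element enumerated at a non-$R_e$-stage might be missed. The natural fix is to show that the sequence $\mu_k$ is increasing and unbounded; this holds because $S_i$ chooses its uses $u^i$ ``new large''. It also requires a convention (or a modest patch of the argument) that $R_e$ checks all elements enumerated since its last visit, not just the most recent $d_{s''}$, as in the permission-interval bookkeeping of Section 2.

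Granting this, the decision procedure is as follows. To decide whether $x\in D$, effectively run the construction from $s_0$ until the first $k$ with $\mu_k>x$ and its stage $t_k$. Then $x\in D$ iff $x\in D[t_k]$, because a later enumeration of $x<\mu_k$ would grant permission on $A_{n^e_k}$ and force $R_e$ to stop. This makes $D$ computable, a contradiction.

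The main obstacle is the unboundedness of the $\mu_k$ together with the ``no missed enumeration'' issue. The minimum is taken over a set of indices $i$ that can change, and $S_i$ maps can be destroyed, which might cause small uses to reappear. I would first try to show that for fixed $i$ the uses $u^i_n$ chosen after $s_0$ are strictly increasing in the stage of definition, since each is new and large. From this, every $\mu_k$ is at least a use chosen after stage $t_{k-1}$, hence larger than $t_{k-1}$, and so $\mu_k\to\infty$.
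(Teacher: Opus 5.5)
Your proposal is essentially the paper's argument: if $R_e$ defined infinitely many parameters, each new $A_{n_{k+1}}$ is created only after Case~3.2 fails to get permission for $A_{n_k}$, permission is never granted later (otherwise $R_e$ would halt), and the unboundedly growing uses would then let you compute longer and longer initial segments of $D$, a contradiction. The issues you flag are real, and the paper's proof passes over them: it asserts that $d_t$ stays above the threshold at \emph{all} stages $t\geq s_1$, although the construction only checks this at $R_e$-stages, and it never addresses initialization. So your reduction to the final life of the node, and your patch that $R_e$ compare $D$ below the threshold with its value at the previous $R_e$-stage (as in Section~2), are exactly what is needed to make that step valid.
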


\begin{proof}
    Let $e\in\omega$ and suppose towards contradiction that $R_e$ defines infinitely many parameters $n_0<n_1<\dots<n_k<\dots$ and thus infinitely many pairs of $\A$-components $\{A_{n_i}\}_{i\in\omega}$. No parameters $n_i$ ever reaches \textbf{Case 3.1}, as that would imply that the strategy will take no further action afterwards, therefore each parameter always enters \textbf{Case 3.2}. Let $n_k$ be a parameter which enters \textbf{Case 3.2} for the first time at stage $s_0$. Since infinitely many parameters greater than $n_k$ are defined, then there exists a stage $s_1>s_0$ such that $d_{s_1}>u^i_{n_k}$ for some $i$. Additionally, for all stages $t\geq s_1$, we have that $d_t$ never becomes smaller than $\min\{u^i_{n_k} : \text{$i$ such that $m_i[t]>n_k$}\}$ or else $R_e$ would enter \textbf{Case 4.1} for $n_k$ and stop acting. So we have for each $n_k$, there exists a stage $s_k$ such that for all stages $t\geq s_k$, $d_t>\min\{u^i_{n_k} : \text{$i$ such that $m_i[t]>n_k$}\}$ and as the $n_k$ parameters grow larger so do the $D$-uses $u_{n_k}$ associated with them. Using this, we can compute longer and longer initial segments of $D$, and so $D$ is computable, yielding a contradiction. \end{proof}
    


\begin{lemma}\label{lemma: diagonalization is successful}
    Let $e\in\omega$ and suppose $R_e$ has some parameter $n^e_k$ for which the strategy enters either \textbf{Case 3.1},  \textbf{Case 3.2} and is granted immediate $D$-permission, \textbf{Case 4.1}. Then, $R_e$ successfully diagonalizes against the computable map $\Phi_e$.
\end{lemma}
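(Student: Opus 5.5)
We argue directly from the final configuration of the components in $A_n$ (where $n=n^e_k$ is the parameter on which $R_e$ finishes). In each of the three cases the strategy reaches \textbf{Step 3} only after the answer to (ii) in \textbf{Step 2} was yes. So at that stage $s'$ there is a finite computation $\Phi_e[s']$ mapping $a_{2n}\mapsto b_{2n}$ and $a_{2n+1}\mapsto b_{2n+1}$, together with the cycles of lengths $2$ and $5n+1$, respectively $2$ and $5n+2$. Since $\Phi_e$ is computable with no oracle, this computation never changes. Thus $\Phi_e(a_{2n})=b_{2n}$ and $\Phi_e(a_{2n+1})=b_{2n+1}$ in the limit, if $\Phi_e$ is total at all. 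If $\Phi_e$ is partial, it is trivially not an isomorphism, so we may assume totality.

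Next I would check that the final configuration is exactly the diagonalizing one displayed in \textbf{Case 3.1} (equivalently \textbf{Case 3.2} with immediate permission, or \textbf{Case 4.1}). In $\A$, the $(5n+3)$-cycle is attached only to $a_{2n}$ and the $(5n+4)$-cycle only to $a_{2n+1}$. In $\B$ this is reversed: $b_{2n}$ carries $5n+4$ and $b_{2n+1}$ carries $5n+3$. To see that the configuration is final, I would note the following. After taking the $s$ outcome, $R_e$ halts all work on its other parameters and acts on no component again unless initialized. No other strategy adds cycles to $A_n$, because parameters are chosen large and are owned by a single strategy. The only point needing care is initialization: if $R_e$ is later initialized, its components are abandoned but not altered. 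Since $\A$ and $\B$ are computable rather than $D$-computable, nothing ever disappears from them. I would state this explicitly, since it is what separates this construction from the previous section.

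Finally, suppose $\Phi_e$ is an isomorphism. Then it maps root nodes to root nodes, since only roots lie on more than one cycle, and it maps $a_{2n}$ to $b_{2n}$. An isomorphism of directed graphs sends the cycles through $a_{2n}$ bijectively onto the cycles through $b_{2n}$ and preserves their lengths. But $a_{2n}$ lies on a $(5n+3)$-cycle, while $b_{2n}$ does not: the $(5n+3)$-cycle through $b_{2n}$ would have to be a separate cycle, and by the distinctness of lengths across components ($5n+j$ with $1\le j\le 4$ is unique to $A_n$), the only $(5n+3)$-cycle in $\B$ is the one at $b_{2n+1}$. This is a contradiction, so $R_e$ is satisfied.

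The main obstacle is justifying that the cycle lengths used in $A_n$ are not reused elsewhere, and that the earlier homogenization in \textbf{Case 3.2} did not already contain $5n+3$ or $5n+4$. Both follow from the $5n+j$ indexing and the explicit configurations, which I would cite.
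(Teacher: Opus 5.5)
Your proposal is correct and follows essentially the same route as the paper. The diagonalizing configuration on $A_n$ is never altered afterwards: no other strategy touches $A_n$, $S$-strategies never change $\A$, and nothing is removed from the computable structures. Hence $a_{2n}$ lies on a $(5n+3)$-cycle while its image $\Phi_e(a_{2n})=b_{2n}$ does not. The paper leaves implicit two points that you make explicit: why the oracle-free computation $\Phi_e(a_{2n})=b_{2n}$ persists, and why the cycle length $5n+3$ occurs only in $A_n$.
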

\begin{proof}
    Let $R_e$ be a strategy and suppose that for the parameter $n^e_k=n$, the strategy enters \textbf{Case 3.1} at a stage $s$ in the construction. This means that the strategy is now able to add the diagonalizing loops to the components in $A_n$ and to their respective $\B$-components.
    We claim that, after this, no other changes are made to $A_n$. No other $R$-strategy can change $A_n$ because, by definition, their set of parameters and thus components is disjoint from $R_e$'s set of parameters and components. $R_e$ also takes no further action after adding the diagonalizing loops to $A_n$. 
    $S$-strategies never change the components of $\A$.
    Taken together, $A_n$ remains unchanged after the diagonalizing loops are added.
    In particular, $a_{2n}$ is in a cycle of length $5n+3$ and not one of length $5n+4$, but $\Phi_e(a_{2n})$ is in a cycle of length $5n+4$ and not one of length $5n+3$.
    Therefore, $\Phi_e$ is not an isomorphism, as desired.

    Now suppose that $R_e$ with parameter $n$ enters \textbf{Case 3.2} and is granted immediate $D$-permission or it enters \textbf{Case 4.1} at a stage $s$ in the construction. As entering case \textbf{Case 4.1} requires an appropriate $D$-permission so our verification in both cases are analogous. $R_e$ now completes its diagonalization against $\Phi_e$ using cycles of lengths $5n+3$ and $5n+4$. We similarly have that $a_{2n}$ is in a cycle of length $5n+3$ but $\Phi_e(a_{2n})$ is not. Just the same as above, no other $R$-strategy or $S$-strategy will change the configuration of loops in $A_n$ and $R_e$ takes no further action after it adds the new cycles above. Thus, $R_e$ succeeds in this case as well.
    Note that in all outcomes, we still have that $\mathcal{A}\cong\mathcal{B}$. \end{proof}

\begin{lemma}\label{lemma: if graphs are isomorphic, then f is defined on all of A}
    Let $i\in\omega$. If $\mathcal{A}\cong\mathcal{M}_i^D$, then $m_i[s]\to\infty$ as $s\to\infty$.
\end{lemma}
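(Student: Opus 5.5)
The plan is to show that for each fixed $n$, the value $m_i[s]$ eventually exceeds $n$ and stays above $n$ from some stage on. This uses the fact that each pair of components $A_k$ with $k\le n$ eventually stops changing.

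\textbf{Step 1: the components $A_0,\dots,A_n$ settle.} Each $A_k$ belongs to at most one $R_e$-strategy. By Lemma \ref{lemma: permission will eventually be granted}, each $R_e$ defines only finitely many components. By Lemma \ref{lemma: diagonalization is successful} and the description of \textbf{Cases 3.1}, \textbf{3.2} and \textbf{4.1}, each $A_k$ is modified only finitely often: once at creation, possibly once by homogenization, and possibly once by adding the $(5k+3)$- and $(5k+4)$-cycles. So there is a stage $s_0$ after which $A_0,\dots,A_n$ are never altered.

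\textbf{Step 2: the resets below $n$ are finite.} The value $m_i$ is lowered only in \textbf{Case 3.2} or \textbf{Case 4.1}, and only to the parameter $k$ of a component that an $R_e$ is diagonalizing on at that moment. Such a reset to some $k\le n$ happens together with a change to $A_k$, so after $s_0$ there are no resets of $m_i$ to values $\le n$. Strictly speaking, a reset is triggered only by strategies $\alpha\supseteq\beta^\frown\langle\infty\rangle$, where $\beta$ is the $S_i$-strategy. If we restrict attention to the $S_i$-strategy on the true path, which is eventually never initialized, then only finitely many initializations occur as well.

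\textbf{Step 3: $m_i$ climbs past $n$.} Fix an isomorphism $g:\A\to\mathcal{M}_i^D$. Since $D$ is c.e., the $D$-enumeration of $\mathcal{M}_i^D$ eventually fixes, stage by stage, every finite piece of the true $\mathcal{M}_i^D$. We want the ``oldest, least indexed'' copy of each component of $A_0,\dots,A_n$ to stabilize. Here the rigidity observation from the first verification applies: distinct components have distinct distinguishing loop lengths $5k+1$ and $5k+2$, so copies are essentially unique. Lemma \ref{lemma: surjective embedding is an isomorphism} ensures that no confusion between two copies can cost us more than finitely many false matches, because a false copy eventually loses a $D$-computation and disappears.

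With this in hand, take a stage after $s_0$ at which all relevant computations for $A_0,\dots,A_n$ have settled. At each subsequent $S_i$-stage where $m_i=k\le n$, Step 2 of the $S_i$-strategy finds the copy of $A_k$ and increments $m_i$. Since the $S_i$-strategy is on the true path and $\mathcal{M}_i^D\cong\A$, it returns infinitely often. Hence $m_i$ exceeds $n$ after finitely many further stages, and by Step 2 of this argument it never drops back to $\le n$.

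\textbf{Main obstacle.} The hardest part is Step 3. The worry is that $m_i$ could be inflated by spurious copies in the enumeration of $\mathcal{M}_i^D$ and then reset repeatedly to values $\le n$. The first thing I would try is a uniform bound on the number of such resets coming from an $R_e$-strategy of each fixed priority, arguing by induction on $n$ with the finitely many strategies that own $A_0,\dots,A_n$.
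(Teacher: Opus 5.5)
Your proof is correct, but it is organized differently from the paper's. The paper follows individual reset events. When an $R_e$ below $S_i^\frown\langle\infty\rangle$ lowers $m_i$ to its parameter, the isomorphism $\mathcal{A}\cong\mathcal{M}_i^D$ lets $S_i$ climb back to the old value. Any later challenge comes from a lower-priority $R$-strategy whose parameter is large, so the reset values drift upward.

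You instead fix $n$ and run a finite-injury argument on the initial segment $A_0,\dots,A_n$, using three facts:
\begin{itemize}
\item each $A_k$ is changed only a bounded number of times (creation, homogenization, diagonalizing loops);
\item every reset of $m_i$ to a value $k$ happens exactly when diagonalizing loops are added to $A_k$ (\textbf{Case 3.2} or \textbf{Case 4.1});
\item the true-path $S_i$-node is initialized only finitely often.
\end{itemize}
Hence after some $s_0$ no reset lands at or below $n$, and the isomorphism then pushes $m_i$ past $n$ for good.

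Your version makes explicit what the paper's proof leaves implicit: why infinitely many resets cannot keep $m_i$ bounded. It also does not depend on which strategies are allowed to reset $m_i$, or on the timing of challenges. The paper's route, in exchange, is phrased in terms of the tree: only strategies below $S_i^\frown\langle\infty\rangle$ reset, and one challenge is handled at a time. That is closer to how the construction is actually run and mirrors the Section 2 argument.

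Two remarks.

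First, the ``main obstacle'' you raise is already handled by your Step 2. Spurious copies in the approximation to $\mathcal{M}_i^D$ can only make $m_i$ increase early. Any later drop to a value $\le n$ would require a change to some $A_k$ with $k\le n$, which no longer happens after $s_0$. So no induction or uniform bound on resets is needed.

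Second, Step 3 contains material this lemma does not need. The stabilization of the ``oldest, least indexed'' copies belongs to the correctness of $f_i$ in the next lemma. Your appeal to the surjective-embedding lemma is also misplaced: that lemma is about the Section 2 graph and says nothing about false copies disappearing. For $m_i\to\infty$ it suffices that each final $A_k$ with $k\le n$ is finite and has a copy in $\mathcal{M}_i^D$. Since $D$ is c.e., that copy is visible in $\mathcal{M}_i^D[s]$ at all sufficiently large stages. So while $m_i=k\le n$, the search succeeds within finitely many $S_i$-stages.
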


\begin{proof}
    Suppose $\mathcal{A}\cong\mathcal{M}_i^D$, so each connected component in $\A$ has a copy in $\mathcal{M}_i^D$. Let $m_i$ be $S_i$'s parameter. If $m_i$ is never redefined to equal some $R_e$'s parameter, then because the two graphs are isomorphic, eventually the strategy finds corresponding components extends the isomorphism and increments $m_i$.
    Thus, $m_i$ continues to increase and never descreases and in this case we are done.
    So suppose that at some stage $s$ in the construction, there is some $R_e$ requirement with its active parameter $n^e=n$ that ends up in \textbf{Case 3.2} and thus $m_i[s]$ is redefined to equal $n$, and so in particular, $m_i[s]<m_i[s-1]$. In order to prove this lemma, it suffices to show that for each $m_i$ value like this, there are only finitely many stages $t$ after stage $s$ such that $m_i[t]<m_i[s-1]$ still. We will eventually see a stage $s'>s$ such that $m_i[s']>m_i[s]$ because $\mathcal{A}\cong\mathcal{M}_i^D$, and so there exists a copy of $A_n$ with the new loops added by the $R_e$-strategy. Once the copies of $A_n$ have been found, $S_i$ will increment $m_i$ until eventually it becomes the same value of $m_i[s-1]$.

    By definition, $m_i$ only decreases once a $R_e$ strategy adds the diagonalizing loops to one of its finitely many components that had previously been mapped by $S_i$. Moreover, it need be the case that such an $R_e$-strategy extends $S_i$ on its infinite outcome. Because $A\cong\M_i^D$, there is a stage after $R_e$ issues its challenge where $S_i$ will recover on its redefined $m_i$ value and thus can take the infinite outcome again. Before this stage, all $R$-strategies extending $S_i$ on a finite outcome cannot challenge and redefine $S_i$'s parameter. Additionally, once $S_i$ recovers and takes the infinite outcome, if it is challenged again by a lower priority $R$-strategy, this $R$-strategy will redefine $m_i$ to be some large parameter distinct from the parameter belonging to $R_e$ by the construction, and so we have that $m_i[s]\to\infty$ as $s\to\infty$.
\end{proof}

\begin{lemma}\label{lemma: if graphs are isomorphic, then f is correct}
    Let $i\in\omega$. If $\mathcal{A}\cong\mathcal{M}_i^D$, then $f_i$ is an isomorphism between $\mathcal{A}$ and $\mathcal{M}_i^D$.
\end{lemma}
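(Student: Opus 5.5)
We must show that the map $f_i$ (really its limit $\lim_s f_i[s]$) is a well-defined isomorphism between $\mathcal{A}$ and $\mathcal{M}_i^D$. The plan is to show three things in turn: $f_i[s]$ stabilizes on each component, the limit is a $D$-computable embedding, and then Lemma~\ref{lemma: surjective embedding is an isomorphism} upgrades it to an isomorphism. Throughout, assume $\mathcal{A}\cong\mathcal{M}_i^D$.

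\textbf{Step 1 (stabilization).} Fix a pair of components $A_n$. Only finitely many $R$-strategies can ever touch $A_n$. Components are owned by the unique $R_e$ whose parameter is $n$, and by Lemma~\ref{lemma: permission will eventually be granted} each $R_e$ defines only finitely many components. Moreover, redefinitions of $m_i$ down to some value $\le n$ come only from $R$-strategies with parameter $\le n$, each of which acts on its components at most once when it adds diagonalizing loops (Lemma~\ref{lemma: diagonalization is successful}: after that, $A_k$ never changes again).

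Hence there is a stage $s_n$ after which no component $A_k$ with $k\le n$ is altered and $m_i$ is never reset to a value $\le n$. After $s_n$, the isomorphism types of $A_0,\dots,A_n$ in $\mathcal{A}$ are final. By Lemma~\ref{lemma: if graphs are isomorphic, then f is defined on all of A}, $m_i$ eventually exceeds $n$ again, so $f_i$ is (re)defined on $A_n$ at some stage after $s_n$.

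\textbf{Step 2 (correctness of the final value).} We must check that this last definition is never destroyed by a change in $D$ and that it maps $A_n$ onto a genuine copy. The map on $A_n$ is defined with a large $D$-use $u^i_n$. The only other way it can be lost is through a $D$-change below $u^i_n$ that alters the relevant part of $\mathcal{M}_i^D$; a change in the true oracle destroys the copy found by the approximation.

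Here we use the convention of always choosing the oldest, least-indexed copy in $\mathcal{M}_i^D$. Since $D$ is c.e., $D\restriction u$ settles, so the true finite piece of $\mathcal{M}_i^D$ containing the true copy of $A_n$ eventually appears with its true use. After that point the chosen copy is permanent, and it is the least-indexed copy that persists. Because the final $A_0,\dots,A_n$ are pairwise non-isomorphic finite components (distinguished by the lengths $5n+1$ and $5n+2$ or by the diagonalizing loops), the image of $A_n$ is a component of $\mathcal{M}_i^D$ isomorphic to $A_n$, and distinct components go to distinct components. Thus $f=\lim_s f_i[s]$ exists, is total, and is an embedding.

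It is $D$-computable: to compute $f(x)$ for $x\in A_n$, search with oracle $D$ for a stage $s\ge s_n$ at which $f_i[s]$ is defined on $A_n$ and $D[s]\restriction u^i_n[s]=D\restriction u^i_n[s]$. The threshold $s_n$ is fixed non-uniformly, which is allowed.

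\textbf{Step 3 (isomorphism).} $f$ is an embedding of $\mathcal{A}$ into $\mathcal{M}_i^D\cong\mathcal{A}$, so by Lemma~\ref{lemma: surjective embedding is an isomorphism} (whose proof via the self-embedding lemma applies equally to $D$-computable copies) it is onto, hence an isomorphism.

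The main obstacle is Step 2. We must argue that the $D$-approximation to $\mathcal{M}_i^D$ eventually stops presenting fake copies below the use. We must also handle components that looked finished at an earlier stage but later acquired diagonalizing loops, which is why we restrict to stages after $s_n$.
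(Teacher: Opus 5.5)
There is a genuine gap in Step~1, at the inference ``$m_i$ eventually exceeds $n$ again, so $f_i$ is (re)defined on $A_n$ at some stage after $s_n$.'' You have just arranged that $m_i$ is never reset to a value $\le n$ after $s_n$. So if $m_i>n$ already holds at $s_n$, $S_i$ never rebuilds its map on $A_n$, and the final map is whatever was built earlier.

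This matters because your list of late changes to $A_n$ misses the homogenization step at the start of \textbf{Case 3.2}. You only consider diagonalizing loops, which either come with a reset of $m_i$ or occur before $S_i$ reaches $A_n$. But whenever some $m_i[s']>n$, $R_e$ adds a $(5n+2)$-cycle to $a_{2n}$ and a $(5n+1)$-cycle to $a_{2n+1}$ \emph{before} checking $d_{s'}$. If $d_{s'}$ is not small enough and permission never arrives later, nothing resets $m_i$. Then $S_i$'s map on $A_n$ remains the one built on the pre-homogenization components $2,5n+1$ and $2,5n+2$.

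Your $D$-computable search in Step~2 returns exactly this old map, which is undefined on the nodes of the two new cycles. So $\lim_s f_i[s]$ is not total, and Step~3 has nothing to apply to. Relatedly, your claim that the final components are pairwise non-isomorphic is false here: a homogenized pair consists of two copies of $2,5n+1,5n+2$.

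The missing case is exactly the one the paper's proof treats in its last paragraph. The old map on $A_n$ is correct on the pre-homogenization part, so the images of $a_{2n}$ and $a_{2n+1}$ lie in distinct components of $\M_i^D$ containing a $(5n+1)$-cycle, resp.\ a $(5n+2)$-cycle. Since $\M_i^D\cong\A$, and the only components of $\A$ with such cycles are the two homogenized (now mutually isomorphic) components of $A_n$, each image component is a copy of $2,5n+1,5n+2$. The old map therefore extends $D$-computably by searching $\M_i^D$ for the missing cycle at each image root. So the isomorphism is a $D$-computable extension of $\lim_s f_i[s]$, not the limit itself.

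With this case added, your stabilization-then-surjectivity outline is a reasonable repackaging of the paper's component-by-component analysis. You should, however, recheck Lemma~\ref{lemma: surjective embedding is an isomorphism} for this section's $\A$, since it was proved for the $\A$ of Section~2. Surjectivity still holds, because every component here is finite and pinned to its pair $A_n$. But a homogenized pair admits a swap, so the ``identity'' part of that lemma fails.
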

\begin{proof}
    By Lemma \ref{lemma: if graphs are isomorphic, then f is defined on all of A}, $f_i$ is defined on all of $\mathcal{A}$ if $\A\cong\mathcal{M}_i^D$. All that remains now is to show that $f_i$ is correct. If an $\mathcal{A}$-component is never changed by some $R_e$ strategy (i.e., it never gained the loops used to diagonalize against some computable map), then $f_i$ will be correct on those components by our construction.

    Suppose for some $n$, $A_n$ was changed by some $R_e$ strategy. We break into the following cases. 
    
    \textbf{Case A:} Suppose $R_e$ adds the cycles of length $5n+3$ and $5n+4$ to $A_n$ as in \textbf{Case 3.1} at stage $s$ of the construction. Since $R_e$ ended up in \textbf{Case 3.1} for $n$, this means that $m_i[s]<n$. Therefore, $S_i$ had not yet mapped the loops of the $A_n$ component, so it must now only wait until there is a stage $s'$ such that $m_i[s']=n-1$ before working on the $A_n$ component. Because $\A\cong\mathcal{M}_i^D$, and by the previous lemma, eventually we will have a later stage $s''>s'$ such that $m_i[s'']=n$ , and so $f_i$ will eventually be defined correctly on all cycles of length $5n+l$ for $l\in\{1,2,3,4\}$ in $A_n$.

    \textbf{Case B:} Suppose $R_e$ added the cycles of length $5n+3$ and $5n+4$ to $A_n$ as in \textbf{Case 3.2} at stage $s$ of the construction. By definition, this occurs only once a $D$-enumeration happens below $\min\{u^i_n : \text{$i$ is such that $m_i[t]>n$}\}$ at some stage $t$. As each $S_k$ strategy assigns uses to its $f_k$ maps, this means that the definition of $f_k$ on $A_n$ disappears for every $S_k$ with $m_k[t]>n$. If $S_i$ is one of said strategies, it will simply be able to redefine $f_i$ on $A_n$ at a later stage and when it does, it will include the newly added cycles of lengths $5n+3$ and $5n+4$. Otherwise, $S_i$ will simply wait until $m[s']=n-1$ at some later stage $s'$ to create the map.
    
    If $D$-permission is never granted for for $A_n$, then recall that $R_e$ homogenized the components in $A_n$ at the beginning of \textbf{Case 3.2}. Although we have the original definition of $f_i$ on $A_n$, since $\A\cong\mathcal{M}_i^D$, copies of the homogenized $A_n$ will eventually appear and stay in $\mathcal{M}_i^D$. Additionally, since the components are both isomorphic, the original definition of $f_i$ can be easily extended, with the help of $D$, on the new loops to form a correct map defined on all nodes and edges found in $A_n$.
\end{proof}

We now state and prove the main verification lemma for this construction.

\begin{lemma}[Main Verification Lemma]\label{verification: main verification lemma 2}
    Let $\pi=\liminf_s\pi_s$ be the true path of the construction, where $\pi_s$ denotes the current true path at stage $s$ of the construction. Let $\alpha\subset\pi$.
    \begin{itemize}
        \item[(1)] If $\alpha$ is an $S_i$-strategy, then either $\alpha$ takes outcome $\infty$ infinitely often or there is an outcome $w_n$ and a stage $t$ such that for all $\alpha$-stages $s>t$, $\alpha$ takes outcome $w_n$. If $\A\cong\mathcal{M}_i^D$, then $\alpha$ takes the $\infty$ outcome infinitely often and $\alpha$ defines a $D$-computable isomorphism $f_\alpha^D:\A\to\mathcal{M}_i^D$.
        \item[(2)] If $\alpha$ is an $R_e$-strategy, then there is an outcome $o\in\{w_n\}_{n\in\omega}\cup\{s\}$ and a stage $t$ such that for all $\alpha$-stages $s>t$, $\alpha$ takes outcome $o$.
    \end{itemize}
    In addition, $\alpha$ satisfies its assigned requirement.
\end{lemma}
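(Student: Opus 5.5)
We prove the Main Verification Lemma by induction along the true path $\pi$. Fix $\alpha\subset\pi$ and let $s_0$ be a stage after which $\alpha$ is never initialized, i.e.\ $\alpha\le_L\pi_s$ for all $s\ge s_0$. Such a stage exists by the inductive hypothesis: each proper initial segment of $\alpha$ on $\pi$ eventually settles on a single outcome or takes $\infty$ infinitely often. We treat the two kinds of strategies separately, and in each case check both the outcome behavior and satisfaction of the requirement.

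For (2), let $\alpha$ be an $R_e$-strategy. By Lemma \ref{lemma: permission will eventually be granted}, after $s_0$ it defines only finitely many parameters $n^e_0<\dots<n^e_k$. There are two cases.
\begin{itemize}
\item[(a)] Some parameter reaches \textbf{Case 3.1}, or \textbf{Case 3.2} with immediate permission, or \textbf{Case 4.1}. Then $\alpha$ takes outcome $s$ forever after, since by construction it does nothing further unless initialized. By Lemma \ref{lemma: diagonalization is successful}, $\Phi_e$ is not an isomorphism.
\item[(b)] No parameter ever reaches those cases. Then the last parameter $n^e_k$ stays the current active parameter forever, and $\alpha$ takes $w_{n^e_k}$ at every $\alpha$-stage. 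If $\Phi_e$ were an isomorphism, it would eventually map $A_{n^e_k}$ correctly into $\mathcal{B}$. The strategy would then proceed to \textbf{Step 3}, and either finish or define a new parameter. Finishing is excluded by case (b), and a new parameter contradicts the choice of $n^e_k$ as last. So $\Phi_e$ is not total or not correct, and $R_e$ is met.
\end{itemize}
In both cases the outcome stabilizes, giving the first clause of (2).

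For (1), let $\alpha$ be an $S_i$-strategy. Suppose first that $\alpha$ takes $\infty$ only finitely often, and fix an $\alpha$-stage after its last $\infty$ outcome and after $s_0$. From then on $m_i$ cannot be changed by any $R$-strategy. Such a change can come only from strategies extending $\alpha^\frown\langle\infty\rangle$, and those are never eligible to act again. Hence $m_i$ is constant, and $\alpha$ takes $w_{m_i}$ forever, which is the dichotomy. Next suppose $\mathcal{A}\cong\mathcal{M}_i^D$. Lemma \ref{lemma: if graphs are isomorphic, then f is defined on all of A} gives $m_i[s]\to\infty$, so $\alpha$ increments $m_i$, and hence takes $\infty$, infinitely often. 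Lemma \ref{lemma: if graphs are isomorphic, then f is correct} gives that the limiting map $f_i$ is an isomorphism. Conversely, if $\mathcal{A}\not\cong\mathcal{M}_i^D$, the requirement $S_i$ holds vacuously.

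It remains to show that $f_\alpha$ is $D$-computable. To compute $f_\alpha$ on $A_n$ with oracle $D$, we search for a stage $t$ with the following three properties:
\begin{itemize}
\item $m_i[t]>n$;
\item the computation of $f_i[t]$ on $A_n$ has use $u^i_n[t]$, and $D\restriction u^i_n[t]=D[t]\restriction u^i_n[t]$;
\item every cycle currently present in $A_n$ at stage $t$ is already mapped.
\end{itemize}
We then output $f_i[t]$ on $A_n$. If $A_n$ is later homogenized without permission, the extension onto the new loops can be found effectively, as in the last paragraph of the proof of Lemma \ref{lemma: if graphs are isomorphic, then f is correct}.

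The main obstacle is showing that such a $D$-true computation is never later destroyed by the diagonalization of some $R$-strategy. By \textbf{Case 3.2} and \textbf{Case 4.1}, every addition of diagonalizing cycles to an already-mapped $A_n$ is triggered by an enumeration below $\min_i u^i_n$. That enumeration changes $D$ below the use, so it contradicts the stage $t$ being $D$-correct. Hence after any $D$-correct stage, $A_n$ can only change by unpermitted homogenization, which is handled by the effective extension above. This is the step where the argument must be checked carefully.
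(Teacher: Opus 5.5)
Your proposal is correct and takes essentially the paper's route. Part (2) comes from Lemma~\ref{lemma: permission will eventually be granted} together with Lemma~\ref{lemma: diagonalization is successful}, and part (1) from the stabilization dichotomy plus Lemmas~\ref{lemma: if graphs are isomorphic, then f is defined on all of A} and~\ref{lemma: if graphs are isomorphic, then f is correct}; your last-parameter argument in (2)(b) and your explicit $D$-oracle search for $f_\alpha$ fill in details the paper only asserts.

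One fix is needed: restrict that search to stages $t\ge s_0$. Otherwise a $D$-correct value $f_i[t]$ from before $\alpha$'s last initialization can become wrong after $m_i$ is reset to $0$, because an $R$-strategy may then diagonalize $A_n$ via \textbf{Case 3.1} with no change in $D$.
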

\begin{proof}
    Towards (1), let $\alpha\subset\pi$ be an $S_i$-strategy and let $s_0$ be the least stage such that for all $s\geq s_0$, $\alpha\leq_L\pi_s$. If $\alpha$ only takes the $\infty$ outcome finitely often, then the argument is the same as in the first part of the proof of Lemma \ref{verification: main verification lemma 1}(1). If $\A\cong\M_i^D$, then by Lemma \ref{lemma: if graphs are isomorphic, then f is defined on all of A} we have that we can build a $D$-computable embedding on all of $\A$. By Lemma \ref{lemma: if graphs are isomorphic, then f is correct}, such an embedding will be the needed $D$-computable isomorphism between $\A$ and $\M_i^D$ and so $\alpha$ satisfies its assigned requirement.

    To show (2), we have by Lemma \ref{lemma: diagonalization is successful}, if $R_e$ enters either \textbf{Case 3.1}, \textbf{Case 3.2} with $D$-permission granted immediately, or \textbf{Case 4.1}, then it will meet its requirement by directly diagonalizing against $\Phi_e$. Otherwise, we have that by Lemma \ref{lemma: permission will eventually be granted}, $R_e$ will only define finitely many $\A$-components throughout the construction, call them $A_0,A_1,\dots,A_n$. If none of these components enter either \textbf{Case 3.1} or \textbf{Case 4.1}, then for each $A_n$, there is a stage $s$ in the construction such that $R_e$ cannot proceed to \textbf{Step 3} for $A_n$. This means that $\Phi_e$ never maps the components of $\A$ to the components of $\B$ correctly.  So, $\Phi_e$ cannot be an isomorphism between $\A$ and $\B$. $R_e$ meets its requirement trivially in this case. In other words, out of the finitely many components created by the strategy, either a component succeeds in adding the diagonalizing loops or the strategy wins by vacuity on the existence of the isomorphism it is trying to diagonalize against.
\end{proof}

\section{Open Questions and Future Work}
One way to push our results further would be to extend the degrees which we are considering.
Let $\M$ be a computable structure.

\begin{question}
    Are there any (non-c.e.) degrees $\mathbf{d}$ where $\M$ being computably categorical relative to $\mathbf{d}$ implies that $\M$ is computably categorical?
\end{question}

\begin{question}
    Are there any (non-c.e.) degrees $\mathbf{d}$ where $\M$ being computably categorical implies that $\M$ is computably categorical relative to $\mathbf{d}$?
\end{question}

The result of Goncharov \cite{Gon80EffDim} suggests that these questions are interesting when $\mathbf{d}$ is $\Delta_3$.
We suspect that our techniques can be used to negatively answer these questions in the case of $\Delta_2$ degrees with a bit more care applied to similar arguments.

Another direction to take this is to consider an alternating chain of behavior of given c.e. sets.
The following would mutually generalize the main result of \cite{DHTM21} and the results of this paper.

\begin{question}
    Given computably enumerable sets $X_0<_TY_0<X_1<_TY_1<\cdots$ is there a computable structure $\A$ that is computably categorical relative to the $X_i$ but not relative to the $Y_i$?
\end{question}

\printbibliography

\end{document}